\documentclass[11pt]{article}
\usepackage[a4paper,margin=1in]{geometry}
\usepackage{amsmath,amssymb,amsthm,mathtools,bm}
\usepackage{enumitem}
\usepackage{booktabs}
\usepackage{microtype}
\usepackage[pdfborder={0 0 0}]{hyperref}
\usepackage{authblk}

\newtheorem{theorem}{Theorem}[section]
\newtheorem{lemma}[theorem]{Lemma}

\newtheorem{corollary}[theorem]{Corollary}

\theoremstyle{remark}

\numberwithin{equation}{section}

\newcommand{\R}{\mathbb R}

\newcommand{\ignore}[1]{}

\title{Scale-uniform inverse inequalities for scaled kernel spaces}
\author{Davoud Mirzaei}
\affil{Department of Information Technology,\\ Uppsala University,
SE-751 05 Uppsala, Sweden\\
\texttt{davoud.mirzaei@it.uu.se}}

\begin{document}
\maketitle

\begin{abstract}
Inverse inequalities are an important tool in the stability and convergence analysis of kernel approximation methods. In a multiscale
setting, however, the trial space changes with the kernel scale $\delta$, and inverse estimates for a fixed kernel are not sufficient. The constants must remain controlled as $\delta\to0$. In this paper, we study scale-uniform inverse inequalities for spaces generated by scaled positive definite kernels whose native spaces are
Sobolev spaces. We first establish inverse estimates in scale-dependent Sobolev spaces. On bounded domains, this leads to a 
scale-uniform inverse inequality in standard Sobolev norms with $L^2$ as the weaker norm. The estimate requires only that the separation distance of the centers be bounded above by a fixed multiple of the kernel scale. This allows the kernel scale to decrease
more slowly than the separation distance, as is relevant in
multiscale refinement. We then establish more general scale-uniform Bernstein inequalities on
the whole space. Our result covers a broad range of weaker and stronger Sobolev indices under the same relation between the separation distance and the kernel scale. The proof works directly with the
Fourier representation of functions in the kernel space and combines a low--high
frequency decomposition with frame estimates for separated exponential
polynomials. This avoids the additional scale factors that arise from
separate comparisons of scaled and standard Sobolev norms. 

\medskip
\noindent
{\em Mathematics Subject Classification (2020):} 41A17, 41A27, 46E35, 65D12, 65T99. 

\end{abstract}

\section{Introduction}

Inverse inequalities are a standard tool in approximation theory and in the numerical analysis of partial differential equations.  In contrast to a direct approximation estimate, which measures how well a sufficiently
smooth function can be approximated by a finite-dimensional space, an inverse inequality compares two norms of a function that already belongs to the approximation space.  A typical estimate has the form
$$
\|v\|_{H^\tau} \leq C h^{\alpha-\tau} \|v\|_{H^\alpha},
\quad 0\leq\alpha<\tau,
$$
where $h$ is a parameter describing the resolution of the approximation
space.  Such estimates are classical for polynomial and finite element
spaces. See, for example
\cite{BrennerScott2008}.  They follow from
scaling and norm equivalence on a fixed finite-dimensional reference
space.  Inverse inequalities are used throughout finite element analysis,
for instance in stability estimates, error analysis in stronger norms,
and estimates involving discrete differential operators.

The corresponding theory for kernel and radial basis function approximation spaces is more complicated.  Given a set of centers
$X=\{x_1,\ldots,x_N\}$, consider
$$ 
V_{\Phi,X} = \operatorname{span} \{\Phi(\cdot-x_j):x_j\in X\}.
$$
Unlike a finite element space, $V_{\Phi,X}$ does not in general admit a
decomposition into fixed finite-dimensional local approximation spaces.
The geometry of the centers and the decay and smoothness properties of
the kernel therefore enter directly into the inverse estimate.  The
separation distance 
$$
q_X = \frac{1}{2}\min_{j\neq k}\|x_j-x_k\|_2
$$
rather than the fill distance, naturally plays the role of the resolution parameter.  One expects inequalities
of Bernstein type in which a stronger Sobolev norm is bounded by a
weaker one multiplied by an appropriate negative power of $q_X$.

An inverse inequality for scattered-data kernel spaces was
established by Narcowich, Ward, and Wendland
\cite{NarcowichWardWendland2006}.  For kernels whose Fourier transforms
have algebraic decay, and hence whose native spaces are Sobolev spaces,
they derived Sobolev error estimates together with an $L^2$ Bernstein
inequality.  An important ingredient of their analysis is the
construction of a band-limited interpolant with bandwidth proportional
to $q_X^{-1}$.  This makes explicit the connection between the
separation of the centers and the highest effective frequencies that
can occur in a kernel approximation space.  Band-limited functions and
sampling inequalities have since become useful tools in the analysis
of kernel approximation. See also
\cite{NarcowichWard2004,NarcowichWardWendland2005}.

The whole-space theory was subsequently developed in several directions.
Ward \cite{Ward2012} obtained $L^p$--Bernstein inequalities for RBF
spaces on $\mathbb R^d$ and used them to establish corresponding
inverse theorems.  In this setting, Bessel-potential norms of a function in an RBF
space are controlled by its $L^p$ norm with the expected dependence
on the separation radius.  Related Bernstein and inverse estimates have
also been developed for kernel approximation on spheres and manifolds.
For example, 
\cite{MhaskarNarcowichPrestinWard2010} establishes $L^p$--Bernstein
estimates and inverse theorems for spherical basis functions.  See also \cite{Mirzaei2018-1,Mirzaei2020-1} where some inverse inequalities are developed to prove stability bounds for Petrov-Galerkin and collocation methods for spherical PDEs.  

The passage from the whole space, or from a manifold without boundary, to a bounded domain introduces an additional difficulty.  A
whole-space inverse inequality cannot simply be restricted to a bounded domain $\Omega$, since the weaker norm of a kernel function on $\mathbb R^d$ cannot in general be controlled by its restriction to $\Omega$.  This
boundary issue motivated a different line of work based on localized
kernel bases.  Hangelbroek, Narcowich, Rieger, and Ward
\cite{HangelbroekNarcowichRiegerWard2018, HangelbroekNarcowichRiegerWard2018Manifolds}
developed direct and inverse estimates on bounded domains using local Lagrange bases.  Their approach provides $L^p$ and Sobolev inverse estimates for spaces generated by localized basis functions, including important classes such as Mat\'ern and surface-spline kernels.  The construction treats the boundary by embedding the physical center set into a suitably enlarged set of centers and constructing localized Lagrange functions using this ambient configuration.  Thus, these results provide a bounded-domain inverse theory, but the approximation spaces and the treatment of the boundary differ from the standard kernel trial space generated solely by translates centered in the physical domain.

For collocation methods for partial differential equations, it is
particularly useful to have an inverse inequality directly for the
original kernel trial space constructed on a set $X\subset \overline \Omega$.
Such a bounded-domain result was established by Cheung, Ling, and
Schaback \cite{CheungLingSchaback2018} in their convergence analysis of
least-squares kernel collocation methods.  They adapted the
band-limited interpolation argument to bounded domains and proved an
inverse inequality between Sobolev norms.  Their result requires the
weaker Sobolev index to be larger than $d/2$.  This restriction is
natural in their proof because point evaluation and kernel
interpolation are used at that Sobolev level.
The restriction on the weaker Sobolev index has direct consequences
for PDE applications.  In \cite{CheungLingSchaback2018}, the inverse
estimate is a key ingredient in deriving stability conditions for
overdetermined least-squares Kansa collocation.  For second-order
elliptic problems in dimensions two and three, the admissible Sobolev
range leads naturally to convergence estimates in $H^2(\Omega)$.
The analysis demonstrates particularly clearly how an inverse
inequality can determine the relation between trial and test
discretizations, and hence the amount of oversampling required for
stability.  More generally, inverse estimates are closely connected
with the stability analysis of strong-form kernel collocation and
least-squares discretizations of PDEs. See also the broader discussion
of kernel techniques for meshless methods in
\cite{SchabackWendland2006,Wendland2005}.

A recent result of Sun and Ling \cite{SunLing2025} extends
the bounded-domain theory.  They establish inverse inequalities for
kernel approximation spaces on bounded Lipschitz domains and compact
Riemannian manifolds and, in particular, remove the restriction
$\alpha>d/2$ on the weaker Sobolev index in the bounded-domain
Bernstein inequality.  They use the interpolation theory in Sobolev spaces to prove their results. 

The inverse inequalities discussed above are essentially
\emph{fixed-kernel} results.  This distinction is important for
multiscale kernel methods.  In such methods, the kernel itself changes
with the discretization level.  Starting from a reference kernel
$\Phi$, one introduces
$$
\Phi_\delta(x)=\delta^{-d}\Phi(x/\delta),
$$
where the scale or support parameter $\delta$ decreases as the centers
are refined.  The corresponding trial space is
$$
V_{\Phi_\delta,X}=\operatorname{span}\{\Phi_\delta(\cdot-x_j):x_j\in X\}.
$$
For each fixed $\delta$, an existing inverse theorem may in principle
be applied to $\Phi_\delta$.  This, however, is not sufficient for a
multiscale analysis.  The constant in the resulting estimate may
depend on the scaled kernel and hence on $\delta$.  If that dependence
deteriorates as $\delta\to0$, the estimate cannot be used uniformly
over the levels of the multiscale method.

This issue is not just a matter of keeping track of constants.  The
native norm of $\Phi_\delta$ is naturally equivalent to a
scale-dependent Sobolev norm with Fourier weight
$$
(1+\delta^2\|\omega\|_2^2)^\tau.
$$
Consequently, a direct application of fixed-kernel arguments 
produces estimates in scaled Sobolev norms.  Converting the two sides
separately to standard Sobolev norms may introduce additional powers
of $\delta$. For instance, \cite{LiCao2015}
derives a Bernstein inequality for multiscale interpolation on the
sphere using scaled compactly supported radial basis functions.
The resulting bound contains an explicit term of the
form $\delta^{-\tau}$.
Such factors are harmless when $\delta$ is fixed, but
they become significant when $\delta$ tends to zero under refinement.

The question considered here is different. We seek Bernstein
inequalities in Sobolev norms whose constants remain uniform
as the kernel scale tends to zero and whose final dependence is
expressed solely in terms of the separation distance. In particular,
on $\mathbb R^d$ we establish such estimates under 
condition
$q_X\leq c_0\delta$, without requiring quasi-uniformity of the center
sets and without introducing any additional power of $\delta$ in the
final bound. On a bounded domain $\Omega$, we obtain a corresponding
scale-uniform inverse inequality under the same condition, but presently
only at the $L^2(\Omega)$ that is when the weaker Sobolev
index is $\alpha=0$.

\subsection{Setting of the problem}

Let $\Phi:\mathbb R^d\to\mathbb R$ be a continuous strictly positive
definite kernel satisfying
\begin{equation}
\label{eq:fourier_decay}
c_\Phi(1+\|\omega\|_2^2)^{-\sigma}
\le
\widehat\Phi(\omega)
\le
C_\Phi(1+\|\omega\|_2^2)^{-\sigma},
\quad
\omega\in\mathbb R^d,
\end{equation}
for some $\sigma>d/2$ and constants
$0<c_\Phi\le C_\Phi<\infty$.  
Here, $\widehat \Phi$ denotes the Fourier transform of $\Phi$, defined by
$$
\widehat \Phi(\omega) = (2\pi)^{-d/2}\int_{\R^d}\Phi(x)e^{-ix^T\omega}dx.
$$
Under this assumption, the native space
of $\Phi$ is norm-equivalent to $H^\sigma(\mathbb R^d)$; see
\cite{Wendland2005,SchabackWendland2006}. The native space norm is defined by
$$
\|u\|_\Phi^2 :=\int_{\R^d} \frac{|\widehat u(\omega)|^2}{\widehat \Phi(\omega)}\, d\omega,
$$
while the Sobolev norm on $H^\sigma(\R^d)$ is defined, using the Fourier transform, by
$$
\|u\|_{H^\sigma(\R^d)}^2 :=(2\pi)^{-d/2}\int_{\R^d} |\widehat u(\omega)|^2(1+\|\omega\|_2^2)^\sigma\, d\omega.
$$
We also consider the scaled
kernel
$$
\Phi_\delta(x)=\delta^{-d}\Phi(x/\delta),
\quad
0<\delta\le1,
$$
whose Fourier transform is
\begin{equation*}
\widehat{\Phi_\delta}(\omega) =\widehat{\Phi}(\delta\omega).
\end{equation*}
For a finite set $X\subset\overline\Omega$, let
$$
V_{\Phi_\delta,X}=\operatorname{span}
\{\Phi_\delta(\cdot-x_j):x_j\in X\}.
$$
Throughout this paper we assume that the reference kernel $\Phi$ satisfies the decay condition 
\eqref{eq:fourier_decay}
for some $\sigma>d/2$ and constants $0<c_\Phi\leq C_\Phi$.
Then for $0<\delta\leq1$, 
\begin{equation}\label{eq:scaled-fourier-decay}
c_\Phi(1+\delta^2\|\omega\|_2^2)^{-\sigma}
\leq
\widehat{\Phi_\delta}(\omega)
\leq
C_\Phi(1+\delta^2\|\omega\|_2^2)^{-\sigma}.
\end{equation}
We denote the fill distance, separation distance, and mesh ratio of a finite point set $X$ by
$$
h_{X,\Omega}
=\sup_{x\in\Omega}\min_{x_j\in X}\|x-x_j\|_2,
\quad q_X
=\frac12\min_{x_j\ne x_k}\|x_j-x_k\|_2, \quad \rho_X =\frac{h_{X,\Omega}}{q_X},
$$
respectively.  
%When the ambient domain is clear, we simply write $h_X$.  
A family of point sets is called quasi-uniform if the mesh
ratios remain uniformly bounded.
Our aim is to understand when inverse estimates for
$V_{\Phi_\delta,X}$ can be obtained with constants independent of
$\delta$, $X$, and the dimension of the trial space.  Of particular
interest is the regime
$$
q_X\leq c_0\delta
$$
which permits the kernel scale to decrease more slowly than the
separation distance and is therefore appropriate for multiscale
refinement.

\subsection{Main contributions}

We first work with Sobolev norms adapted to the scale $\delta$.  In
these norms the native-space structure of $\Phi_\delta$ is uniform in
$\delta$, which allows the classical band-limited interpolation
argument of \cite{NarcowichWardWendland2006,
CheungLingSchaback2018} to be carried over to scaled kernel spaces.
Using interpolation, we obtain scale-uniform estimates over a broad
range of Sobolev indices.  An immediate consequence is a
bounded-domain endpoint estimate with $L^2(\Omega)$ on the weaker side
under the condition $q_X\leq c_0\delta$.

We then consider standard Sobolev norms on the whole space.  A direct
conversion from the scaled estimate is not sharp. It introduces an
additional negative power of $\delta$.  To avoid this loss, we work
directly with the Fourier representation of a kernel network.  The
Fourier integral is separated into low and high frequency regions at
the natural frequency scale $q_X^{-1}$.  Frame estimates for
exponential polynomials associated with separated centers provide the
coefficient control required in the high-frequency region.  Comparing
the two Sobolev orders before estimating the scale-dependent kernel
multiplier allows the powers of $\delta$ to cancel.  This yields a
scale-uniform whole-space Bernstein inequality with the expected power
of the separation distance under the relatively mild condition
$q_X\leq c_0\delta$.

This paper complements the existing fixed-kernel inverse theory in
\cite{NarcowichWardWendland2006,Ward2012, HangelbroekNarcowichRiegerWard2018,
CheungLingSchaback2018,SunLing2025}
by making the dependence on the kernel scale explicit and establishing
estimates that remain uniform as the scale parameter tends to zero.
It leaves open the problem of proving a general scale-uniform inverse inequality on bounded domains for the full range of Sobolev indices.

The remainder of the paper is organized as follows.
Section~\ref{sec:scale-uniform-bernstein} develops the inverse theory
in scaled Sobolev spaces. In Section~\ref{sect:inverse_Htau_L2}, we use
these results to derive a scale-uniform inverse inequality in standard
Sobolev norms on a bounded domain $\Omega$, with $L^2(\Omega)$ as the
weaker norm, corresponding to the case $\alpha=0$.
In Section~\ref{sect:general_inverse_Rd}, we turn to the whole space
$\mathbb R^d$ and establish a general scale-uniform inverse inequality
which covers the full range of admissible Sobolev indices.
The paper concludes with a discussion of the main results and directions
for future research.

\section{Scale-uniform inverse inequalities in scaled Sobolev spaces}\label{sec:scale-uniform-bernstein}

In this section, we derive an inverse inequality for functions in $V_{\Phi_\delta,X}$ in terms of the corresponding scaled Sobolev norms. Although these scaled norms may not be the natural norms for the applications, the analysis provides useful insight into the role of the kernel scale $\delta$ and we finally use it to establish an inverse inequality in standard Sobolev norms in Section \ref{sect:inverse_Htau_L2}. The proof will use the standard band-limited interpolation technique commonly used to establish inverse inequalities for fixed-scale kernel spaces
\cite{CheungLingSchaback2018,NarcowichWardWendland2006}.

For $s\geq0$, define the scaled Sobolev norm on $\mathbb{R}^d$ by
\begin{equation*}
\|v\|_{H_\delta^s(\mathbb{R}^d)}^2:=(2\pi)^{-d/2}\int_{\mathbb{R}^d}
(1+\delta^2\|\omega\|_2^2)^s |\widehat v(\omega)|^2 \,d\omega.
\end{equation*}
We define the scaled Sobolev space on $\Omega$ by
restriction,
$$
H_\delta^s(\Omega):=\left\{ v=w|_\Omega:\; w\in H_\delta^s(\mathbb R^d) \right\},
$$
equipped with the quotient norm
\begin{equation}\label{eq:scaled-sobolev-domain}
\|v\|_{H_\delta^s(\Omega)}:=\inf_{\substack{w\in H_\delta^s(\mathbb R^d)\\
    w|_\Omega=v}}\|w\|_{H_\delta^s(\mathbb R^d)}.
\end{equation}
Thus $H_\delta^s(\Omega)$ is the restriction (quotient) space
associated with $H_\delta^s(\mathbb R^d)$.  
This is the usual
restriction-space definition of Sobolev/Bessel-potential spaces on
domains, here applied to the scaled Fourier weight
$(1+\delta^2\|\omega\|_2^2)^s$. See for example
\cite{Rychkov1999}. In particular, for $s=0$ the scaled space is independent of $\delta$
and
$$
H_\delta^0(\Omega)=L^2(\Omega)
$$
isometrically.
By the definition of the quotient norm, the restriction operator
$$
R_\Omega:H_\delta^s(\mathbb R^d)\to H_\delta^s(\Omega), \quad R_\Omega w=w|_\Omega,
$$
has norm at most one.  In particular,
\begin{equation*}
\|w|_\Omega\|_{H_\delta^s(\Omega)} \le \|w\|_{H_\delta^s(\mathbb R^d)}.
\end{equation*}
From \eqref{eq:scaled-fourier-decay}, the native space norm of
$\Phi_\delta$ is uniformly equivalent to the scaled Sobolev norm, i.e.,
\begin{equation}\label{eq:uniform-native-equivalence}
(2\pi)^{-d/4}c_\Phi^{\frac12}\|v\|_{\Phi_\delta}
\leq \|v\|_{H_\delta^\sigma(\mathbb{R}^d)}
\leq (2\pi)^{-d/4}C_\Phi^{\frac12}\|v\|_{\Phi_\delta},
\end{equation}
where $c_\Phi$ and $C_\Phi$ are the same constants as in \eqref{eq:fourier_decay} and \eqref{eq:scaled-fourier-decay} which depend only on fixed reference kernel $\Phi$ but and are
independent of $\delta$.

We first formulate a scaled version of the band-limited interpolation
result. The following lemma is inspired by a similar result in
\cite{Wendland2010Multiscale}. The main difference is that, in our
setting, the Sobolev norm on the right-hand side is taken over the
bounded domain $\Omega$, rather than over the whole space $\mathbb{R}^d$.

\begin{lemma}
\label{lem:scaled-bandlimited}
Let $\alpha>d/2$.  Then there exist constants
$\kappa_{\alpha,d}>0$ and $C_{\alpha,d,\Omega}>0$, independent of
$\delta$, $X$, and $v$, such that for every
$v\in H_\delta^\alpha(\Omega)$ there exists
$f_{\rho,\delta}\in H_\delta^\alpha(\mathbb{R}^d)$ satisfying
\begin{equation*}
f_{\rho,\delta}|_X=v|_X,
\end{equation*}
\begin{equation*}
\operatorname{supp}\widehat{f_{\rho,\delta}}
\subset
B(0,\rho),
\quad
\rho=\kappa_{\alpha,d}q_X^{-1},
\end{equation*}
and
\begin{equation*}
\label{eq:scaled-bandlimited-bound}
\|f_{\rho,\delta}\|_{H_\delta^\alpha(\mathbb{R}^d)}
\leq
C_{\alpha,d,\Omega}
\|v\|_{H_\delta^\alpha(\Omega)}.
\end{equation*}
\end{lemma}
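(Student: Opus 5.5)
Since $v\in H_\delta^\alpha(\Omega)$ is defined by restriction, I would first pick an extension. By the definition \eqref{eq:scaled-sobolev-domain} of the quotient norm, there is $w\in H_\delta^\alpha(\mathbb R^d)$ with $w|_\Omega=v$ and
\[
\|w\|_{H_\delta^\alpha(\mathbb R^d)}\le 2\|v\|_{H_\delta^\alpha(\Omega)}.
\]
Because $X\subset\overline\Omega$ and $\alpha>d/2$, $w$ is continuous, so $w|_X=v|_X$ is well defined. The trace on $\partial\Omega$ is handled by continuity. This step is what replaces the whole-space norm in \cite{Wendland2010Multiscale} by the domain norm. It also shows that no extension-operator constant is needed, so the constant will not really depend on $\Omega$.

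The main step is then a scaled version of the Narcowich--Ward--Wendland band-limited interpolation theorem applied to $w$. I would construct $f$ as the band-limited interpolant
\[
f=\sum_j c_j\,\psi_\rho(\cdot-x_j),
\]
where $\psi_\rho$ is the fixed band-limited bump from \cite{NarcowichWardWendland2006} with $\operatorname{supp}\widehat\psi_\rho\subset B(0,\rho)$ and $\rho=\kappa_{\alpha,d}q_X^{-1}$. The coefficients are chosen so that $f|_X=w|_X$. Equivalently, $f$ is the minimal-norm interpolant in the Paley--Wiener space $PW_\rho$ equipped with the weight $(1+\delta^2\|\omega\|^2)^\alpha$. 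In the fixed-kernel proof, the estimate $\|f\|_{H^\alpha}\le C\|w\|_{H^\alpha}$ comes from two ingredients. The first is invertibility of the interpolation matrix, with constants depending only on $\rho q_X=\kappa_{\alpha,d}$ and $d$. The second is a comparison of $\|f\|_{H^\alpha}$ with $\|w\|_{H^\alpha}$ through a splitting of $w$ into the low-frequency part $\widehat w\,\chi_{B(0,\rho)}$ and a high-frequency remainder. The remainder's values at $X$ are controlled by
\[
\sum_j |w_{\rm high}(x_j)|^2\lesssim q_X^{-d}\int_{\|\omega\|>\rho}(1+\|\omega\|^2)^{\alpha}|\widehat w|^2\,\rho^{-2\alpha}\ldots,
\]
which uses a weighted sum over separated points via the Fourier decay of a bump.

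The hard part is making this uniform in $\delta$ with the weight $(1+\delta^2\|\omega\|^2)^\alpha$. I would follow the approach of \cite{Wendland2010Multiscale}. For $\delta\le q_X$ (more precisely $\delta\rho\lesssim1$), the scaled weight on $B(0,\rho)$ is comparable to $1$, and the sampling estimates reduce to $L^2$-type bounds that are scale-free. For $\delta\rho\gtrsim1$, I would rescale $x\mapsto x/\delta$. This maps $X$ to $X/\delta$ with separation $q_X/\delta$ and turns $H_\delta^\alpha$ into the standard $H^\alpha$, up to the factor $\delta^{d/2}$ that appears on both sides and cancels. I would then apply the unscaled NWW result with separation $q_X/\delta$ and bandwidth $\kappa\delta/q_X$. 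The NWW constant is uniform in the separation as long as the bandwidth is proportional to the inverse separation, and $\alpha>d/2$ is exactly what makes the high-frequency tail summable. Finally, I would scale back and check that $\operatorname{supp}\widehat f\subset B(0,\kappa q_X^{-1})$.

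The main obstacle I expect is confirming that the NWW constant is genuinely independent of the separation when the separation is large, that is, when $q_X/\delta$ is large. In that regime the usual proof relies on $q\le1$-type normalizations in $(1+\|\omega\|^2)^\alpha$. I would handle it by redoing the key estimate with the homogeneous weight $\|\omega\|^{2\alpha}$ plus an $L^2$ term. The $L^2$ part is controlled trivially because the interpolation matrix is uniformly invertible, and the homogeneous part is exactly scale-invariant. Their combination then gives the inequality
\[
\|f_{\rho,\delta}\|_{H_\delta^\alpha(\mathbb R^d)}\le C_{\alpha,d,\Omega}\|w\|_{H_\delta^\alpha(\mathbb R^d)}\le 2C_{\alpha,d,\Omega}\|v\|_{H_\delta^\alpha(\Omega)},
\]
with a constant independent of $\delta$, $X$ and $v$.
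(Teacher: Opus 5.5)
Your skeleton (near-minimal extension $w$, the isometric dilation $\tilde w(y)=\delta^{d/2}w(\delta y)$ with $\tilde X=\delta^{-1}X$, the Narcowich--Ward--Wendland band-limited interpolant at separation $q_X/\delta$, then dilating back) is exactly the paper's proof. You are also right that the delicate point is whether that constant survives when $q_X/\delta$ is large; the paper simply cites the fixed-scale result there. But your treatment of that regime is wrong, and it cannot be repaired, because the statement fails there.

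When $\delta\rho\lesssim1$, i.e.\ $q_X\gtrsim\kappa\delta$, the sampling bounds are not scale-free. Uniform invertibility of the interpolation matrix (the lower frame bound of Lemma~\ref{lem:exponential-frame} with $\rho q_X=\kappa$) only yields $\|f\|_{L^2}^2\le C\rho^{-d}\sum_j|w(x_j)|^2$. Point evaluation on $H^\alpha_\delta(\mathbb R^d)$ has norm of order $\delta^{-d/2}$. So the ``trivially controlled'' $L^2$ part carries the factor $(\delta\rho)^{-d/2}\sim(q_X/\delta)^{d/2}$, and the scale invariance of the homogeneous part does not help.

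This is sharp. Fix $x_1,x_2\in\Omega$ with $\|x_1-x_2\|_2=2q$, take $\phi\in C_0^\infty(B(0,1))$ with $\phi(0)=1$, and for $\delta<q$ let $w(x)=\phi((x-x_1)/\delta)$ and $v=w|_\Omega$. Then $\|v\|_{H^\alpha_\delta(\Omega)}\le\|w\|_{H^\alpha_\delta(\mathbb R^d)}=\delta^{d/2}\|\phi\|_{H^\alpha(\mathbb R^d)}$. On the other hand, any $f$ with $\operatorname{supp}\widehat f\subset B(0,\rho)$ and $f(x_1)=v(x_1)=1$ satisfies $1\le(2\pi)^{-d/2}|B(0,\rho)|^{1/2}\|\widehat f\|_{L^2}$, hence $\|f\|_{H^\alpha_\delta(\mathbb R^d)}\ge c\,\rho^{-d/2}$. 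The ratio is $\gtrsim(q/\delta)^{d/2}\to\infty$ as $\delta\to0$, for every choice of $\kappa_{\alpha,d}$.

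So what is missing is a hypothesis, not an estimate. The lemma holds uniformly only under a bound $q_X\le c_0\delta$, which is precisely what Theorem~\ref{thm:scaled-high-order-inverse} assumes when it uses the lemma. Under that bound the rescaled separation $q_{\tilde X}=q_X/\delta$ is at most $c_0$. The band-limited interpolation constant is then bounded in terms of $c_0,\alpha,d$: take the low-pass part of $\tilde w$, band-limit-interpolate the high-frequency remainder, and sample on balls of radius $q_{\tilde X}$. Your rescale-and-apply step then works in all cases and coincides with the paper's argument.

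You should drop the claim that the case $\delta\le q_X$ is scale-free; it is exactly the case where the statement is false. Note also that the paper's proof invokes the fixed-scale result at separation $q_X/\delta$ without restriction, so it tacitly needs the same assumption.
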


\begin{proof}
Let $\varepsilon>0$.  By the definition of the restriction norm in \eqref{eq:scaled-sobolev-domain},
there exists an extension
$w\in H_\delta^\alpha(\mathbb{R}^d)$ of $v$ such that
\begin{equation}
\label{eq:near-minimal-extension}
\|w\|_{H_\delta^\alpha(\mathbb{R}^d)}
\leq
(1+\varepsilon)
\|v\|_{H_\delta^\alpha(\Omega)}.
\end{equation}
Define
$$
\widetilde w(y)=\delta^{d/2}w(\delta y),
\quad
\widetilde X=\delta^{-1}X.
$$
Then
$q_{\widetilde X}={q_X}/{\delta}$.
Since
$$
\widehat{\widetilde w}(\xi)
=
\delta^{-d/2}\widehat w(\xi/\delta),
$$
a change of variables gives
\begin{equation}
\label{eq:scaled-norm-change}
\|\widetilde w\|_{H^\alpha(\mathbb{R}^d)} =\|w\|_{H_\delta^\alpha(\mathbb{R}^d)}.
\end{equation}
Apply the standard band-limited interpolation result to
$\widetilde w$ on the set $\widetilde X$ (See \cite{NarcowichWardWendland2006}). According to this result, there exists a function
$\widetilde w_\rho$ such that
$$
\widetilde w_\rho|_{\widetilde X}
= \widetilde w|_{\widetilde X},
\quad  \operatorname{supp}\widehat{\widetilde w_\rho}
\subset B\left(0,\widetilde \rho\,\right),\quad \widetilde \rho = \kappa_{\alpha,d}\,q_{\widetilde X}^{-1},
$$
and
$$
\|\widetilde w_\rho\|_{H^\alpha(\mathbb{R}^d)}
\leq
C_{\alpha,d}
\|\widetilde w\|_{H^\alpha(\mathbb{R}^d)}.
$$
Set
$$
f_{\rho,\delta}(x) = \delta^{-d/2}\widetilde w_\rho(x/\delta).
$$
Then
$f_{\rho,\delta}|_X=v|_X$.  Moreover,
$\widehat{f_{\rho,\delta}}(\omega)= \delta^{d/2}\widehat{\widetilde w_\rho}(\delta\omega)$,
and therefore
$$
\operatorname{supp}\widehat{f_{\rho,\delta}}
\subset B\left(0, \delta^{-1}\widetilde \rho\,\right) =B(0,\rho).
$$
Finally,
\begin{align*}
\|f_{\rho,\delta}\|_{H_\delta^\alpha(\mathbb{R}^d)}^2 &= \int_{\mathbb{R}^d} (1+\delta^2\|\omega\|_2^2)^\alpha |\widehat{f_{\rho,\delta}}(\omega)|^2 \,d\omega \\
& = \int_{\mathbb{R}^d} (1+\|\delta\omega\|_2^2)^\alpha |\widehat{w_{\rho}}(\delta\omega)|^2 \,d(\delta\omega)
= \|\widetilde w_\rho\|_{H^\alpha(\mathbb{R}^d)}.
\end{align*}
Together with \eqref{eq:scaled-norm-change}, this gives
$$
\|f_{\rho,\delta}\|_{H_\delta^\alpha(\mathbb{R}^d)}
\leq
C_{\alpha,d}
\|w\|_{H_\delta^\alpha(\mathbb{R}^d)}.
$$
Using \eqref{eq:near-minimal-extension} and letting
$\varepsilon\to0$ proves the result.
\end{proof}

We next obtain an inverse estimate in the scaled Sobolev
norm.
\begin{theorem}\label{thm:scaled-high-order-inverse}
Let $d/2<\alpha\leq\sigma$.
Fix a constant $c_0>0$.  Then there exists $C=C_{c_0,\alpha,d,\Phi,\Omega}>0$, independent of $\delta$ and $X$, such that
\begin{equation}\label{eq:scaled-high-order-inverse}
\|v\|_{H_\delta^\sigma(\Omega)}
\leq C\left(\frac{\delta}{q_X}\right)^{\sigma-\alpha}
\|v\|_{H_\delta^\alpha(\Omega)}
\end{equation}
for every $v\in V_{\Phi_\delta,X}$ satisfying $q_X\leq c_0\delta$.
\end{theorem}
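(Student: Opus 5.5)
Let $v=\sum_j c_j\Phi_\delta(\cdot-x_j)\in V_{\Phi_\delta,X}$; we use $v$ itself as its extension to $\mathbb R^d$. Since the restriction operator has norm at most one, it suffices to bound $\|v\|_{H_\delta^\sigma(\mathbb R^d)}$, and by \eqref{eq:uniform-native-equivalence} it suffices to bound the native norm $\|v\|_{\Phi_\delta}$. The classical trick is the reproducing identity. For any $f$ in the native space of $\Phi_\delta$ with $f|_X=v|_X$,
$$
\|v\|_{\Phi_\delta}^2=\sum_j c_j v(x_j)=\sum_j c_j f(x_j)=(v,f)_{\Phi_\delta}.
$$
We take $f=f_{\rho,\delta}$ from Lemma~\ref{lem:scaled-bandlimited}, applied to $v|_\Omega\in H^\alpha_\delta(\Omega)$; this is legitimate since $\alpha>d/2$ and $X\subset\overline\Omega$. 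Being band-limited, $f_{\rho,\delta}$ lies in the native space. Writing the inner product in Fourier form, $(v,f)_{\Phi_\delta}=\int \widehat v\,\overline{\widehat f}/\widehat{\Phi_\delta}$, we split the weight $(1+\delta^2\|\omega\|_2^2)^{\sigma}$ as a product of the exponents $\sigma/2$ and $\sigma/2$. By Cauchy--Schwarz and \eqref{eq:scaled-fourier-decay},
$$
\|v\|_{\Phi_\delta}^2\le C\,\|v\|_{H^\sigma_\delta(\mathbb R^d)}\,\|f_{\rho,\delta}\|_{H^\sigma_\delta(\mathbb R^d)} \le C'\|v\|_{\Phi_\delta}\|f_{\rho,\delta}\|_{H^\sigma_\delta(\mathbb R^d)},
$$
with constants depending only on $c_\Phi,C_\Phi,d$. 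Hence $\|v\|_{H^\sigma_\delta(\mathbb R^d)}\le C\|f_{\rho,\delta}\|_{H^\sigma_\delta(\mathbb R^d)}$.

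Next we use band-limitedness. Since $\operatorname{supp}\widehat{f_{\rho,\delta}}\subset B(0,\rho)$ with $\rho=\kappa_{\alpha,d}q_X^{-1}$, we have on the support
$$
(1+\delta^2\|\omega\|_2^2)^{\sigma-\alpha}\le (1+\delta^2\rho^2)^{\sigma-\alpha}\le \Big(1+\kappa_{\alpha,d}^2\,\tfrac{\delta^2}{q_X^2}\Big)^{\sigma-\alpha}.
$$
The hypothesis $q_X\le c_0\delta$ gives $1\le c_0^2\delta^2/q_X^2$, so the right-hand side is at most $(c_0^2+\kappa^2_{\alpha,d})^{\sigma-\alpha}(\delta/q_X)^{2(\sigma-\alpha)}$. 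This is exactly where the condition enters: it lets the constant $1$ be absorbed into the power of $\delta/q_X$. Therefore
$$
\|f_{\rho,\delta}\|_{H_\delta^\sigma(\mathbb R^d)}\le C(\delta/q_X)^{\sigma-\alpha}\|f_{\rho,\delta}\|_{H^\alpha_\delta(\mathbb R^d)} \le C(\delta/q_X)^{\sigma-\alpha}\|v\|_{H^\alpha_\delta(\Omega)}
$$
by Lemma~\ref{lem:scaled-bandlimited}.

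Chaining the estimates gives \eqref{eq:scaled-high-order-inverse}, with every constant independent of $\delta$ and $X$. The only delicate point is the first step. One must check that the interpolation identity $(v,f)_{\Phi_\delta}=\sum_j c_j f(x_j)$ holds for $f_{\rho,\delta}$, which is the reproducing property and requires $f_{\rho,\delta}$ to be in the native space. This holds because $f_{\rho,\delta}\in H^\alpha_\delta$ with compactly supported Fourier transform lies in every $H^s$. One must also check that the Cauchy--Schwarz split produces $\|v\|_{H^\sigma_\delta}$, and not a weaker quantity, on the $v$ side, so that it can be cancelled against the left-hand side.
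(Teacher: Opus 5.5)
Your proposal is correct and follows the paper's proof. The paper cites the minimum-native-norm property $\|v\|_{\Phi_\delta}\le\|f_{\rho,\delta}\|_{\Phi_\delta}$ of the $\Phi_\delta$-interpolant, which you rederive via the reproducing identity and Cauchy--Schwarz; it then bounds $\|f_{\rho,\delta}\|_{H^\sigma_\delta(\mathbb R^d)}$ by $(1+\delta^2\rho^2)^{(\sigma-\alpha)/2}\|f_{\rho,\delta}\|_{H^\alpha_\delta(\mathbb R^d)}$, uses $q_X\le c_0\delta$ to absorb the $1$, and concludes with Lemma~\ref{lem:scaled-bandlimited}, exactly as you do.
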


\begin{proof}
Let $v\in V_{\Phi_\delta,X}$ and let
$f_{\rho,\delta}$ be the band-limited function from
Lemma~\ref{lem:scaled-bandlimited}.  Since
$f_{\rho,\delta}$ and $v$ agree on $X$ and
$v\in V_{\Phi_\delta,X}$, the function $v$ is the
$\Phi_\delta$-interpolant of $f_{\rho,\delta}$ on $X$.  Hence the
minimum native norm property gives
$
\|v\|_{\Phi_\delta}\leq\|f_{\rho,\delta}\|_{\Phi_\delta}.
$
Using \eqref{eq:uniform-native-equivalence},
\begin{equation}\label{eq:inverse-native-chain}
\begin{aligned}
\|v\|_{H_\delta^\sigma(\Omega)}
&\leq \|v\|_{H_\delta^\sigma(\mathbb{R}^d)} \\
&\leq (2\pi)^{-d/4}C_\Phi^{\frac12}\|v\|_{\Phi_\delta}\\
&\leq (2\pi)^{-d/4}C_\Phi^{\frac12}\|f_{\rho,\delta}\|_{\Phi_\delta}\\
&\leq c_\Phi^{-\frac12} C_\Phi^{\frac12}\|f_{\rho,\delta}\|_{H_\delta^\sigma(\mathbb{R}^d)}.
\end{aligned}
\end{equation}
Since
$$
\operatorname{supp}\widehat{f_{\rho,\delta}}
\subset B(0,\rho),\quad \rho=\kappa_{\alpha,d}q_X^{-1},
$$
we have
\begin{align*}
\|f_{\rho,\delta}\|_{H_\delta^\sigma(\mathbb{R}^d)}^2
&=(2\pi)^{-d/2}\int_{\|\omega\|_2\leq\rho}(1+\delta^2\|\omega\|_2^2)^\sigma
|\widehat{f_{\rho,\delta}}(\omega)|^2\,d\omega \\
&=(2\pi)^{-d/2}\int_{\|\omega\|_2\leq\rho}(1+\delta^2\|\omega\|_2^2)^{\sigma-\alpha}(1+\delta^2\|\omega\|_2^2)^\alpha
|\widehat{f_{\rho,\delta}}(\omega)|^2\,d\omega \\
&\leq (2\pi)^{-d/2}
(1+\delta^2\rho^2)^{\sigma-\alpha}
\|f_{\rho,\delta}\|_{H_\delta^\alpha(\mathbb{R}^d)}^2.
\label{eq:bandlimited-scaled-bernstein}
\end{align*}
Since $q_X/\delta\leq c_0$, we can write 
$$
1+\delta^2\rho^2
=1+\kappa_{\alpha,d}^2\frac{\delta^2}{q_X^2} = \left(\frac{q_X^2}{\delta^2}+\kappa_{\alpha,d}^2\right)\frac{\delta^2}{q_X^2}
\leq
\left(c_0^2+\kappa_{\alpha,d}^2\right)
\frac{\delta^2}{q_X^2}.
$$
Thus
$$
\|f_{\rho,\delta}\|_{H_\delta^\sigma(\mathbb{R}^d)}
\leq
C
\left(
\frac{\delta}{q_X}
\right)^{\sigma-\alpha}
\|f_{\rho,\delta}\|_{H_\delta^\alpha(\mathbb{R}^d)},
$$
where $C$ depends only on $\Phi$, $c_0$, $\alpha$, and $d$. 
Using Lemma~\ref{lem:scaled-bandlimited} and
\eqref{eq:inverse-native-chain} proves
\eqref{eq:scaled-high-order-inverse}.
\end{proof}

The rest of this section is devoted to extend the range of the stronger Sobolev norm in Theorem \ref{thm:scaled-high-order-inverse} from the only end index $\sigma>d/2$ to any index $\tau$ with $0\leq\tau\leq\sigma$, and to extend the range of the weaker Sobolev norm form $\alpha>d/2$ to $\alpha\geq 0$.  
For this aim we require interpolation in the scaled Sobolev scale. 

\begin{lemma}
\label{lem:scaled-interpolation}
Let $\Omega\subset\mathbb R^d$ be a bounded Lipschitz domain and let
$0\le t<\alpha<\sigma$ such that
$\alpha=(1-\theta)t+\theta\sigma$ for $0<\theta<1$.
Then there exists a constant $C=C_{\Omega,t,\alpha,\sigma}>0$,
independent of $\delta$, such that
\begin{equation}
\label{eq:scaled-interpolation}
\|v\|_{H_\delta^\alpha(\Omega)}
\le
C\|v\|_{H_\delta^t(\Omega)}^{1-\theta}\|v\|_{H_\delta^\sigma(\Omega)}^\theta
\end{equation}
for every $v\in H_\delta^\sigma(\Omega)$ and every $0<\delta\le1$.
\end{lemma}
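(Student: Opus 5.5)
The plan is to reduce \eqref{eq:scaled-interpolation} to the trivial interpolation inequality on $\mathbb R^d$ by means of a \emph{single} extension operator that is bounded simultaneously on all the scaled spaces, with bounds independent of $\delta$. On the whole space, Hölder's inequality on the Fourier side gives exactly
$$
\|w\|_{H_\delta^\alpha(\mathbb R^d)}\le \|w\|_{H_\delta^t(\mathbb R^d)}^{1-\theta}\|w\|_{H_\delta^\sigma(\mathbb R^d)}^{\theta}.
$$
To see this, write $(1+\delta^2\|\omega\|_2^2)^\alpha=(1+\delta^2\|\omega\|_2^2)^{(1-\theta)t}(1+\delta^2\|\omega\|_2^2)^{\theta\sigma}$ and apply Hölder with exponents $1/(1-\theta)$ and $1/\theta$.

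Suppose we have a linear operator $E$, mapping functions on $\Omega$ to functions on $\mathbb R^d$, with $(Ev)|_\Omega=v$ and $\|Ev\|_{H_\delta^s(\mathbb R^d)}\le C_s\|v\|_{H_\delta^s(\Omega)}$ for $s\in\{t,\sigma\}$, where $C_s$ does not depend on $\delta\in(0,1]$. Then for $v\in H_\delta^\sigma(\Omega)$ the argument is short. Since the restriction operator has norm at most one,
$$
\|v\|_{H_\delta^\alpha(\Omega)}\le\|Ev\|_{H_\delta^\alpha(\mathbb R^d)}\le \|Ev\|_{H_\delta^t(\mathbb R^d)}^{1-\theta}\|Ev\|_{H_\delta^\sigma(\mathbb R^d)}^{\theta}\le C_t^{1-\theta}C_\sigma^\theta\|v\|_{H_\delta^t(\Omega)}^{1-\theta}\|v\|_{H_\delta^\sigma(\Omega)}^{\theta}.
$$
It is essential that $E$ be a universal operator defined intrinsically on $\Omega$, independent of $s$, as in Rychkov \cite{Rychkov1999}. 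Near-minimal extensions chosen separately for $t$ and for $\sigma$ would not coincide.

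The main obstacle is the $\delta$-uniformity of $E$. I would handle it in three steps.

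\textbf{Step 1: special Lipschitz domains.} Let $D=\{x_d>\phi(x')\}$ with Lipschitz constant $L$, and let $E_D$ be Rychkov's universal extension operator for $D$. Its $H^s$ bound depends only on $s$, $d$ and $L$. The dilation $y=x/\delta$ maps $D$ onto another special Lipschitz domain $D_\delta=\{y_d>\delta^{-1}\phi(\delta y')\}$, which has the same Lipschitz constant $L$. The normalized dilation $w\mapsto\delta^{d/2}w(\delta\,\cdot)$ is an isometry from $H_\delta^s$ to $H^s$, both on $\mathbb R^d$ (by \eqref{eq:scaled-norm-change}) and for the quotient norms on $D$ and $D_\delta$. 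Conjugating $E_{D_\delta}$ by this dilation therefore gives an extension operator for $D$ whose $H_\delta^s$ bound equals the $H^s$ bound of $E_{D_\delta}$, and hence depends only on $L$.

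This conjugated operator varies with $\delta$, which is harmless since the lemma is stated for each fixed $\delta$. Alternatively, one can check that Rychkov's construction commutes with dilations, because it is built from a cone-adapted Calderón reproducing formula. Either route works, but with the conjugated operator the universality in $s$ must hold for the same operator at both $s=t$ and $s=\sigma$. It does, because Rychkov's operator is universal.

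\textbf{Step 2: the bounded domain.} Cover $\partial\Omega$ by finitely many charts in which $\Omega$ coincides locally with a special Lipschitz domain $D_j$, and add one interior patch. Take a fixed smooth partition of unity $\{\varphi_j\}$ subordinate to this cover and cutoffs $\psi_j\in C_c^\infty$ with $\psi_j=1$ on $\operatorname{supp}\varphi_j$. Then set
$$
Ev=\sum_j\psi_j\,E_{D_j}(\varphi_j v).
$$
To control the terms I need multiplication by a fixed $\varphi\in C_c^\infty(\mathbb R^d)$ to be bounded on $H_\delta^s(\mathbb R^d)$ uniformly for $\delta\le1$; by restriction the same then holds on the quotient spaces. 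For this I would use Peetre's inequality together with $\delta\le1$:
$$
(1+\delta^2\|\omega\|_2^2)^{s/2}\le 2^{s/2}(1+\delta^2\|\omega-\eta\|_2^2)^{s/2}(1+\|\eta\|_2^2)^{s/2}.
$$
Write $\widehat{\varphi u}$ as a convolution of $\widehat\varphi$ with $\widehat u$. Young's inequality then bounds $\|\varphi u\|_{H_\delta^s}$ by $2^{s/2}\|(1+\|\cdot\|_2^2)^{s/2}\widehat\varphi\|_{L^1}\|u\|_{H_\delta^s}$, up to the normalizing constant. This is finite because $\widehat\varphi$ is a Schwartz function.

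\textbf{Step 3: restrictions in the chart.} One must also check that $\varphi_j v$, regarded as a function on $D_j$, has $H_\delta^s(D_j)$ norm bounded by $\|v\|_{H_\delta^s(\Omega)}$. To see this, take any extension $w$ of $v$ and observe that $\varphi_j w$ extends $\varphi_j v$ from $D_j$, then apply the multiplier bound of Step 2.

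Combining Steps 1 to 3 gives $\delta$-uniform bounds for $E$ at every $s\ge0$, and in particular at $s=t$ and $s=\sigma$. The displayed chain above then yields \eqref{eq:scaled-interpolation} with a constant depending only on $\Omega$, $t$, $\alpha$ and $\sigma$.
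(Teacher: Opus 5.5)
Your proof is correct, but it gets the $\delta$-uniformity by a different route than the paper. The paper also starts from the whole-space H\"older inequality and the isometry $T_\delta$, but it dilates the \emph{whole} domain. It shows $\|v\|_{H^s_\delta(\Omega)}=\|v_\delta\|_{H^s(\Omega_\delta)}$ with $\Omega_\delta=\delta^{-1}\Omega$. It then asserts that the family $\{\Omega_\delta\}_{0<\delta\le1}$ has uniformly bounded Lipschitz character, since the Lipschitz constants are unchanged and the chart radii only grow. Finally, it cites the standard interpolation inequality on Lipschitz domains with a constant depending only on that character.

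You instead keep a fixed partition of unity on $\Omega$ and dilate only the special Lipschitz model domains, whose Lipschitz constant is exactly dilation invariant. You control the cutoffs with the Peetre/Young multiplier bound, which is exactly where $\delta\le1$ enters. You then reduce to the whole-space H\"older inequality through a single extension operator that is bounded at both $s=t$ and $s=\sigma$.

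The paper's argument is shorter. However, it leaves the key point as an assertion: that the interpolation constant depends only on Lipschitz character, uniformly over a family of domains whose diameters blow up. Your argument effectively proves that point. It costs you the fact that the norm of Rychkov's operator on a special Lipschitz domain depends only on the Lipschitz bound. This holds because his construction uses the domain only through the cone condition $D+K\subset D$, and that condition depends only on the Lipschitz bound. As a by-product you also get a $\delta$-uniform extension operator.

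Drop the aside in Step~1 claiming that Rychkov's construction commutes with dilations. His Calder\'on reproducing formula is discrete and dyadic, with an inhomogeneous $j=0$ term, so the operator is not dilation invariant. Only the conjugation argument is valid, and that is the one your proof actually uses.
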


\begin{proof}
We first consider the corresponding spaces on $\mathbb R^d$.
By definition,
$$
\|w\|_{H_\delta^s(\mathbb R^d)}^2
=\int_{\mathbb R^d}(1+\delta^2\|\omega\|_2^2)^s |\widehat w(\omega)|^2\,d\omega.
$$
Since
$\alpha=(1-\theta)t+\theta\sigma$,
H\"older's inequality gives
\begin{align*}
\|w\|_{H_\delta^\alpha(\mathbb R^d)}^2
&= (2\pi)^{-d/2}
\int_{\mathbb R^d}
\left[
(1+\delta^2\|\omega\|_2^2)^t
|\widehat w(\omega)|^2
\right]^{1-\theta}
\left[ (1+\delta^2\|\omega\|_2^2)^\sigma |\widehat w(\omega)|^2\right]^\theta\,d\omega
\\
&\leq
\|w\|_{H_\delta^t(\mathbb R^d)}^{2(1-\theta)}
\|w\|_{H_\delta^\sigma(\mathbb R^d)}^{2\theta}.
\end{align*}
Hence
\begin{equation*}
\|w\|_{H_\delta^\alpha(\mathbb R^d)}
\leq
\|w\|_{H_\delta^t(\mathbb R^d)}^{1-\theta} \|w\|_{H_\delta^\sigma(\mathbb R^d)}^\theta.
\end{equation*}
Thus the whole-space interpolation constant is one for every
$\delta>0$.
We now pass to the restriction spaces.  Set
$\Omega_\delta:=\delta^{-1}\Omega$
and define the dilation
$$
(T_\delta w)(y) := \delta^{d/2}w(\delta y).
$$
Its Fourier transform is
$$
\widehat{T_\delta w}(\xi) =\delta^{-d/2}\widehat w(\xi/\delta).
$$
Therefore, after the change of variables $\xi=\delta\omega$,
$$
\begin{aligned}
\|T_\delta w\|_{H^s(\mathbb R^d)}^2
&=(2\pi)^{-d/2} \int_{\mathbb R^d} (1+\|\xi\|_2^2)^s \delta^{-d} |\widehat w(\xi/\delta)|^2\,d\xi
\\
&=(2\pi)^{-d/2}\int_{\mathbb R^d} (1+\delta^2\|\omega\|_2^2)^s |\widehat w(\omega)|^2\,d\omega
\\
&= \|w\|_{H_\delta^s(\mathbb R^d)}^2.
\end{aligned}
$$
Thus $T_\delta$ is an isometry from
$H_\delta^s(\mathbb R^d)$ onto $H^s(\mathbb R^d)$.
The same is true for the corresponding restriction spaces.
For $v\in H_\delta^s(\Omega)$, define
$$
v_\delta(y):=\delta^{d/2}v(\delta y),
\quad y\in\Omega_\delta.
$$
Using the quotient definition of the domain norms and the
one-to-one correspondence
$$
w|_\Omega=v
\quad\Longleftrightarrow\quad
(T_\delta w)|_{\Omega_\delta}=v_\delta,
$$
we obtain
\begin{equation}
\label{eq:scaled-domain-dilation-isometry}
\|v\|_{H_\delta^s(\Omega)} =\|v_\delta\|_{H^s(\Omega_\delta)}.
\end{equation}
Since $\Omega$ is Lipschitz, every dilated domain
$\Omega_\delta=\delta^{-1}\Omega$ is Lipschitz with the same
Lipschitz constants.  Moreover, for $0<\delta\le1$, the radii of
the corresponding boundary charts do not decrease.  Hence the
family $\{\Omega_\delta:0<\delta\le1\}$ has uniformly bounded
Lipschitz character.  The standard interpolation inequality for
Bessel-potential spaces on Lipschitz domains therefore holds
uniformly on this family:
$$
\|z\|_{H^\alpha(\Omega_\delta)}
\le
C \|z\|_{H^t(\Omega_\delta)}^{1-\theta} \|z\|_{H^\sigma(\Omega_\delta)}^\theta,
$$
where $C$ depends only on the Lipschitz character of $\Omega$ and
on $t,\alpha,\sigma$, and is independent of $\delta$. See for
example
\cite{BerghLofstrom1976,Triebel1978,Rychkov1999}.
Applying this estimate to $z=v_\delta$ and using
\eqref{eq:scaled-domain-dilation-isometry} for the three Sobolev
indices gives \eqref{eq:scaled-interpolation}
with $C$ independent of $\delta$.
\end{proof}

Now, we apply the above interpolation theory on inverse inequality \eqref{eq:scaled-high-order-inverse} to extend the range of the stronger Sobolev norm from the only end index $\sigma>d/2$ to any index $\tau$ with $0\leq \tau\leq\sigma$, and to extend the range of the weaker Sobolev norm form $\alpha>d/2$ to $\alpha\geq 0$.  

\begin{theorem}\label{thm:scaled-general-inverse}
Let $\Omega\subset\mathbb{R}^d$ be a bounded Lipschitz domain and let $\Phi$ satisfy the decay condition \eqref{eq:fourier_decay}
with $ \sigma>d/2$. Assume that
$0\leq\alpha\leq \tau\leq\sigma$.
Fix $c_0>0$. Then, there exists a constant
$C=C_{c_0,\alpha,\tau,\sigma,d,\Phi,\Omega}>0$
independent of $\delta$ and $X$, such that
\begin{equation}\label{eq:scaled-general-inverse}
\|v\|_{H_\delta^\tau(\Omega)}
\leq C \left(\frac{\delta}{q_X}\right)^{\tau-\alpha} \|v\|_{H_\delta^\alpha(\Omega)}
\end{equation}
for every $v\in V_{\Phi_\delta,X}$ satisfying
$q_X\leq c_0\delta$ where $0<\delta\leq1$.
\end{theorem}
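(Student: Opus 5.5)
We first reduce the case of a general stronger index $\tau$ to the endpoint $\tau=\sigma$. Then we remove the restriction $\alpha>d/2$ by interpolation.

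\emph{Step 1: the case $\tau=\sigma$ with arbitrary $0\le\alpha\le\sigma$.} If $\alpha=\sigma$ there is nothing to prove. Fix an auxiliary index $\beta$ with $d/2<\beta<\sigma$, for instance $\beta=(d/2+\sigma)/2$. For $\alpha\ge\beta$ (with $\alpha>d/2$), Theorem~\ref{thm:scaled-high-order-inverse} applies directly. For $0\le\alpha<\beta$, Theorem~\ref{thm:scaled-high-order-inverse} gives
\[
\|v\|_{H_\delta^\sigma(\Omega)}\le C\Bigl(\frac{\delta}{q_X}\Bigr)^{\sigma-\beta}\|v\|_{H_\delta^\beta(\Omega)} .
\]
Writing $\beta=(1-\theta)\alpha+\theta\sigma$ with $\theta=(\beta-\alpha)/(\sigma-\alpha)\in(0,1)$, Lemma~\ref{lem:scaled-interpolation} (with $t=\alpha$) gives
\[
\|v\|_{H_\delta^\beta(\Omega)}\le C\|v\|_{H_\delta^\alpha(\Omega)}^{1-\theta}\|v\|_{H_\delta^\sigma(\Omega)}^{\theta}.
\]
Substituting this and dividing by $\|v\|_{H_\delta^\sigma(\Omega)}^{\theta}$ is legitimate: $v\in V_{\Phi_\delta,X}$ is smooth enough, so this norm is finite, and the case $v=0$ is trivial. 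We obtain
\[
\|v\|_{H_\delta^\sigma(\Omega)}^{1-\theta}\le C\Bigl(\frac{\delta}{q_X}\Bigr)^{\sigma-\beta}\|v\|_{H_\delta^\alpha(\Omega)}^{1-\theta}.
\]
Since $(\sigma-\beta)/(1-\theta)=\sigma-\alpha$, this is exactly the estimate \eqref{eq:scaled-general-inverse} with $\tau=\sigma$. The constant depends on $\beta$, which is determined by $d$ and $\sigma$, and it is uniform in $\delta$ because both ingredients are.

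\emph{Step 2: general $\alpha\le\tau\le\sigma$.} The cases $\tau=\alpha$ and $\tau=\sigma$ are trivial or covered by Step 1, so assume $\alpha<\tau<\sigma$. Apply Lemma~\ref{lem:scaled-interpolation} with $t=\alpha$ and intermediate index $\tau=(1-\eta)\alpha+\eta\sigma$, where $\eta=(\tau-\alpha)/(\sigma-\alpha)$:
\[
\|v\|_{H_\delta^\tau(\Omega)}\le C\|v\|_{H_\delta^\alpha(\Omega)}^{1-\eta}\|v\|_{H_\delta^\sigma(\Omega)}^{\eta}.
\]
Inserting the Step 1 bound for $\|v\|_{H_\delta^\sigma(\Omega)}$ gives the factor $(\delta/q_X)^{\eta(\sigma-\alpha)}=(\delta/q_X)^{\tau-\alpha}$, as claimed.

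\emph{Main obstacle.} The main obstacle is Step 1. There the interpolation must be used "backwards": the interpolation inequality is applied and then the unknown top norm is absorbed. This requires finiteness of $\|v\|_{H_\delta^\sigma(\Omega)}$, and it requires the interpolation constant of Lemma~\ref{lem:scaled-interpolation} to be independent of $\delta$, which that lemma provides. One must also check that the exponents combine exactly to $\sigma-\alpha$, so that no stray power of $\delta/q_X$ survives. Finally, the hypothesis $q_X\le c_0\delta$ enters only through Theorem~\ref{thm:scaled-high-order-inverse}, which is invoked with weaker index $\beta$.
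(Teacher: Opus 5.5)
Your proposal is correct and follows the paper's proof essentially step for step: you apply Theorem~\ref{thm:scaled-high-order-inverse} at an auxiliary index $\beta\in(d/2,\sigma)$, interpolate $\beta$ between $\alpha$ and $\sigma$ with Lemma~\ref{lem:scaled-interpolation}, absorb the top norm to obtain the case $\tau=\sigma$, and then use a second interpolation for $\alpha<\tau<\sigma$. The differences are cosmetic. The paper takes $\beta\in(\max\{\alpha,d/2\},\sigma)$, which avoids your case split between $\alpha\ge\beta$ and $\alpha<\beta$, and you state explicitly that $\|v\|_{H_\delta^\sigma(\Omega)}$ is finite, as the absorption step requires, whereas the paper uses this without comment.
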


\begin{proof}
First we extend the range of $\alpha$. Choose
$$
\beta\in
\left(
\max\left\{\alpha,\frac d2\right\},\sigma
\right).
$$
Since $d/2<\beta< \sigma$, Theorem~\ref{thm:scaled-high-order-inverse} gives
\begin{equation}
\label{eq:general-inverse-proof-1}
\|v\|_{H_\delta^\sigma(\Omega)}
\leq
C
\left(\frac{\delta}{q_X}\right)^{\sigma-\beta}
\|v\|_{H_\delta^\beta(\Omega)},
\end{equation}
where $C=C_{c_0,\beta,d,\Phi,\Omega}$.
Let
$\theta={(\beta-\alpha)}/{(\sigma-\alpha)}$,
so that
$
\beta=(1-\theta)\alpha+\theta\sigma.
$
By Lemma~\ref{lem:scaled-interpolation},
$$
\|v\|_{H_\delta^\beta(\Omega)}
\leq C
\|v\|_{H_\delta^\alpha(\Omega)}^{1-\theta}
\|v\|_{H_\delta^\sigma(\Omega)}^\theta,
$$
where $C=C_{\alpha,\sigma,\beta,\Omega}$.
Substituting this estimate into
\eqref{eq:general-inverse-proof-1} gives
$$
\|v\|_{H_\delta^\sigma(\Omega)}^{1-\theta}
\leq
C
\left(\frac{\delta}{q_X}\right)^{\sigma-\beta}
\|v\|_{H_\delta^\alpha(\Omega)}^{1-\theta},
$$
where $C=C_{c_0,\beta,\alpha,d,\Phi,\Omega}$
Since
$1-\theta = {(\sigma-\beta)}/{(\sigma-\alpha)}$,
we obtain
\begin{equation}\label{eq:sigma-t-inverse}
\|v\|_{H_\delta^\sigma(\Omega)}
\leq
C
\left(\frac{\delta}{q_X}\right)^{\sigma-\alpha}
\|v\|_{H_\delta^\alpha(\Omega)}.
\end{equation}
Since the argument holds for every index $\beta$ in the mentioned interval, we may fix any such $\beta$ once and for all, remove the explicit dependence of the constant $C$ on $\beta$ and keep it dependent on $d$, $\tau$ and $\alpha$ instead (together with the other fixed problem parameters). Next, let $\eta={(\tau-\alpha)}/{(\sigma-\alpha)}$,
so that
$\tau=(1-\eta)\alpha+\eta\sigma$.
Another application of Lemma~\ref{lem:scaled-interpolation} yields
$$
\|v\|_{H_\delta^\tau(\Omega)}
\leq
C
\|v\|_{H_\delta^\alpha(\Omega)}^{1-\eta}
\|v\|_{H_\delta^\sigma(\Omega)}^\eta,
$$
where $C=C_{\alpha,\tau,\sigma,\Omega}$.
Using \eqref{eq:sigma-t-inverse}, we can write 
$$
\begin{aligned}
\|v\|_{H_\delta^\tau(\Omega)}
&\leq
C
\|v\|_{H_\delta^\alpha(\Omega)}^{1-\eta}
\left[
\left(\frac{\delta}{q_X}\right)^{\sigma-\alpha}
\|v\|_{H_\delta^\alpha(\Omega)}
\right]^\eta
\\
&=
C
\left(\frac{\delta}{q_X}\right)^{\eta(\sigma-\alpha)}
\|v\|_{H_\delta^\alpha(\Omega)}.
\end{aligned}
$$
where $C=C_{c_0,\alpha,\tau,\sigma,d,\Phi,\Omega}$.
The fact that $\eta(\sigma-\alpha)=\tau-\alpha$,
proves \eqref{eq:scaled-general-inverse}.
\end{proof}

\section{From scaled norms to standard norms}\label{sect:inverse_Htau_L2}

Both sides of the inverse inequality \eqref{eq:scaled-general-inverse} are expressed with the scaled Sobolev norms $H_\delta^\tau(\Omega)$ and $H_\delta^\alpha(\Omega)$. In application, however, we mostly need the measurement in standard Sobolev norms.  
The following corollary proves a special case of inverse inequalities on standard Sobolev norm for which the weaker norm is only the $L^2(\Omega)$ norm.

\begin{corollary} \label{cor:scale-uniform-bernstein}
Let $\Omega\subset\mathbb{R}^d$ be a bounded Lipschitz domain and let
$\Phi$ satisfy \eqref{eq:fourier_decay} for some
$\sigma>d/2$.  Assume that $0\leq\tau\leq\sigma$. Fix $c_0>0$. Then there exists a constant
$
C=C_{d,\sigma,\tau,\Phi,\Omega,c_0}>0
$
such that, for every $0<\delta\leq1$, every finite set
$X\subset\Omega$ satisfying $q_X\leq c_0\delta$,
and every $v\in V_{\Phi_\delta,X}$,
\begin{equation} \label{eq:scale-uniform-bernstein-final}
\|v\|_{H^\tau(\Omega)}
\leq
C\, q_X^{-\tau}\|v\|_{L^2(\Omega)}.
\end{equation}
In particular, the constant $C$ is independent of both $\delta$
and $X$.
\end{corollary}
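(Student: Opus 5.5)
We apply Theorem~\ref{thm:scaled-general-inverse} with $\alpha=0$ and then convert the two scaled norms to standard Sobolev norms. On the weak side nothing is lost, because $H_\delta^0(\Omega)=L^2(\Omega)$ isometrically. The theorem therefore gives, for $v\in V_{\Phi_\delta,X}$ with $q_X\le c_0\delta$,
\begin{equation*}
\|v\|_{H_\delta^\tau(\Omega)}\le C\left(\frac{\delta}{q_X}\right)^{\tau}\|v\|_{L^2(\Omega)} .
\end{equation*}
It remains to bound $\|v\|_{H^\tau(\Omega)}$ by $\delta^{-\tau}\|v\|_{H_\delta^\tau(\Omega)}$ with a constant independent of $\delta\in(0,1]$. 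The factors $\delta^{\tau}$ and $\delta^{-\tau}$ then cancel and leave $q_X^{-\tau}$.

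For this comparison we start on the whole space and use the pointwise weight inequality
\begin{equation*}
1+\|\omega\|_2^2\le \delta^{-2}(1+\delta^2\|\omega\|_2^2),
\qquad 0<\delta\le 1,
\end{equation*}
which holds because $\delta^{-2}\ge1$. Raising it to the power $\tau\ge0$ gives $\|w\|_{H^\tau(\mathbb R^d)}\le \delta^{-\tau}\|w\|_{H_\delta^\tau(\mathbb R^d)}$ for every $w$. We pass to $\Omega$ through the quotient norms. We take any extension $w$ of $v$ with $\|w\|_{H_\delta^\tau(\mathbb R^d)}\le(1+\varepsilon)\|v\|_{H_\delta^\tau(\Omega)}$. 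The standard space $H^\tau(\Omega)$ is defined by restriction and is equivalent on a Lipschitz domain to any intrinsic definition, so $\|v\|_{H^\tau(\Omega)}\le\|w\|_{H^\tau(\mathbb R^d)}$. Combining this with the whole-space bound and letting $\varepsilon\to0$ gives $\|v\|_{H^\tau(\Omega)}\le\delta^{-\tau}\|v\|_{H_\delta^\tau(\Omega)}$. The same extension works for both norms, so no extension operator with $\delta$-dependent bounds is needed.

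Chaining the two estimates yields
\begin{equation*}
\|v\|_{H^\tau(\Omega)}\le \delta^{-\tau}C\,\delta^{\tau}q_X^{-\tau}\|v\|_{L^2(\Omega)}=C\,q_X^{-\tau}\|v\|_{L^2(\Omega)}.
\end{equation*}
The constant $C$ is the one from Theorem~\ref{thm:scaled-general-inverse} with $\alpha=0$, so it depends only on $c_0,\tau,\sigma,d,\Phi,\Omega$. The case $\tau=0$ is trivial.

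All the real work is already done in Theorem~\ref{thm:scaled-general-inverse}, in particular the $\delta$-uniform interpolation of Lemma~\ref{lem:scaled-interpolation} used there to reach $\alpha=0$. The only point that needs care is that the comparison of the stronger norms goes in the favourable direction (the standard norm is bounded by $\delta^{-\tau}$ times the scaled norm) and costs exactly $\delta^{-\tau}$. It cancels exactly only because the weaker side is $L^2$, where the scaled and standard norms coincide. For $\alpha>0$ the same conversion would leave an uncompensated factor $\delta^{-\alpha}$, which is why the corollary is restricted to $\alpha=0$.
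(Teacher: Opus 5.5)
Your proposal is correct and follows the paper's proof essentially step for step. Both arguments apply the pointwise bound $1+\|\omega\|_2^2\le\delta^{-2}(1+\delta^2\|\omega\|_2^2)$ to a near-minimal $H_\delta^\tau$-extension, and then use Theorem~\ref{thm:scaled-general-inverse} with $\alpha=0$, so that $\delta^{-\tau}$ cancels against $(\delta/q_X)^{\tau}$. The only cosmetic difference is that the paper first keeps a general $\alpha$, obtains $C\delta^{-\alpha}q_X^{-\tau}\|v\|_{H_\delta^\alpha(\Omega)}$, and only then sets $\alpha=0$; this is exactly the observation in your closing remark.
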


\begin{proof}
Given $v\in H^{\tau}(\Omega)$ and given $\varepsilon>0$, by the definition of the restriction norm in \eqref{eq:scaled-sobolev-domain}, there exists an extension
$w\in H_\delta^\tau(\mathbb{R}^d)$ of $v$ such that
\begin{equation}
\label{eq:near-minimal-extension2}
\|w\|_{H_\delta^\tau(\mathbb{R}^d)}
\leq
(1+\varepsilon)
\|v\|_{H_\delta^\tau(\Omega)}.
\end{equation}
We also have,
$$
1+\|\omega\|_2^2
\leq
\delta^{-2}(1+\delta^2\|\omega\|_2^2).
$$
This gives 
\begin{align*}
    \|w\|_{H^{\tau}(\R^d)}^2 &= C_d \int_{\R^d} (1+\|\omega\|_2^2)^\tau |\widehat w(\omega)|^2 \, d\omega \\
    & \leq C_d \delta^{-2\tau}\int_{\R^d} (1+\delta^2\|\omega\|_2^2)^\tau |\widehat w(\omega)|^2 \, d\omega \\
    & = C_d \delta^{-2\tau} \|w\|_{H_\delta^\tau(\R^d)}^2,
\end{align*}
which shows that 
\begin{equation}\label{eq:bound-on-H-tau-delta}
    \|w\|_{H^{\tau}(\R^d)} \leq C_d \delta^{-\tau} \|w\|_{H_\delta^\tau(\R^d)}.
\end{equation}
Combining \eqref{eq:near-minimal-extension2} and \eqref{eq:bound-on-H-tau-delta} gives 
$$
\|w\|_{H^{\tau}(\R^d)} \leq (1+\varepsilon)C_d \delta^{-\tau} \|v\|_{H_\delta^\tau(\Omega)}.
$$
Now we apply the inverse inequality \eqref{eq:scaled-general-inverse} to obtain 
$$
\|v\|_{H^{\tau}(\Omega)}\leq \|w\|_{H^{\tau}(\R^d)} \leq
(1+\varepsilon) C \delta^{-\tau} \left(\frac{\delta}{q_X}\right)^{\tau-\alpha} \|v\|_{H_\delta^\alpha(\Omega)}
$$
for any $0\leq\alpha\leq \tau$, where $C=C_{c_0,\alpha,\tau,\sigma,d,\Phi,\Omega}$. Letting $\varepsilon$ goes to zero and the fact that $\|v\|_{H^{\tau}(\Omega)}\leq \|w\|_{H^{\tau}(\R^d)}$ result in 
$$
\|v\|_{H^{\tau}(\Omega)}\leq 
 C \delta^{-\alpha}{q_X}^{-\tau} \|v\|_{H_\delta^\alpha(\Omega)}.
$$
Here is the place that we can go only for special case $\alpha=0$ where in this case we have $\delta^{-\alpha}=1$ and
$H_\delta^0(\Omega) = L^2(\Omega)$.
This proves \eqref{eq:scale-uniform-bernstein-final}.
\end{proof}

\section{A general scale-uniform inverse inequality on $\R^d$}\label{sect:general_inverse_Rd}

The inverse estimate established in the previous section uses 
$L^2$
as the weaker norm. A scale-uniform extension to intermediate Sobolev norms $H^\alpha$, $0<\alpha<\tau$, even if we replace the bounded domain $\Omega$ with the whole space $\R^d$, appears to require additional arguments beyond the scaled native-space technique used there. 
The reason is that a straightforward application
of the scaled Sobolev inverse estimate
$$
\|v\|_{H_\delta^\sigma}
\leq C
\left(\frac{\delta}{q_X}\right)^{\sigma-\alpha}
\|v\|_{H_\delta^\alpha}
$$
does not immediately give the desired estimate in the standard
Sobolev norms.  Indeed, using the norm comparisons
$$
\|v\|_{H^\sigma}
\leq C
\delta^{-\sigma}\|v\|_{H_\delta^\sigma},
\quad
\|v\|_{H_\delta^\alpha}
\leq C
\|v\|_{H^\alpha},
$$
leads only to
$$
\|v\|_{H^\sigma}
\leq C
\delta^{-\alpha}q_X^{\alpha-\sigma}
\|v\|_{H^\alpha}.
$$
Thus an additional factor $\delta^{-\alpha}$ is introduced.  This
loss is not caused by the kernel space itself, but by applying two
global norm comparisons separately.  The first comparison attains
its worst scaling in the high frequency regime, whereas the second
one is sharp in the low frequency regime.  Combining their worst
cases therefore loses information about the actual frequency
distribution of functions in the scaled kernel space.

To avoid this loss, we work directly with the Fourier representation of function 
$$
v=\sum_{j=1}^N a_j\Phi_\delta(\cdot-x_j) \in V_{\Phi_\delta,X}.
$$
Its Fourier transform is of the form
$$
\widehat v(\omega)
=
\widehat\Phi(\delta\omega)P_X(\omega),
\quad
P_X(\omega)
=
\sum_{j=1}^N a_j e^{-ix_j^T\omega}.
$$
Consequently, for $s\ge0$,
$$
\|v\|_{H^s(\mathbb R^d)}^2
=
(2\pi)^{-d/2}\int_{\mathbb R^d}(1+\|\omega\|_2^2)^s|\widehat\Phi(\delta\omega)|^2|P_X(\omega)|^2\,d\omega.
$$
The important observation is that the same kernel multiplier
$|\widehat\Phi(\delta\omega)|^2$ occurs in both the strong and weak Sobolev norms.  Keeping this multiplier inside the argument allows its $\delta$-dependence to be compared at the same frequencies,
rather than estimating the two norms independently.
The natural frequency threshold is of order $q_X^{-1}$.  We
therefore split the Fourier domain into
$$
\|\omega\|_2\leq c\, q_X^{-1}
\quad\text{and}\quad
\|\omega\|_2\geq c\, q_X^{-1}.
$$
On the low-frequency region, the desired inverse factor follows
directly from
$$
(1+\|\omega\|_2^2)^{\sigma-\alpha}
\leq c\,
q_X^{-2(\sigma-\alpha)}.
$$
No estimate of the kernel multiplier $|\widehat\Phi(\delta\omega)|^2$ is needed there. In the high frequency region the analysis is more detailed. We develop this observation into a precise theoretical result in the following.

To control the exponential polynomial $P_X$ uniformly with respect to the number and location of the centers, we use an Ingham-type frame estimate for separated exponentials.
See, for example, the theory of
multivariate Fourier series and Ingham inequalities
\cite{KunisMollerPeterVonDerOhe2018,PottsTasche2013}.
For completeness, and because only a non-sharp version is needed here, we include a proof based on a smooth Fourier cutoff and a packing argument.

\begin{lemma}\label{lem:exponential-frame}
Let
$
X=\{x_1,\ldots,x_N\}\subset\mathbb R^d
$
be a finite set with separation distance $q_X$,
and let
$$
P_X(\omega) = \sum_{j=1}^N a_j e^{-i x_j^T \omega},
\quad \boldsymbol a=(a_1,\ldots,a_N)^T\in\mathbb C^N.
$$
Then there exist constants
$\kappa_0>0$ and 
$0<c_{\rm F}\le C_{\rm F}<\infty$
depending only on $d$, such that, whenever
$
Rq_X\ge\kappa_0,
$
we have
\begin{equation}\label{eq:exp-frame-ball}
c_{\rm F}R^d\|\boldsymbol a\|_{\ell_2}^2
\le
\int_{B(0,R)}|P_X(\omega)|^2\,d\omega
\le
C_{\rm F}R^d\|\boldsymbol a\|_{\ell_2}^2.
\end{equation}
Moreover, there exist fixed constants
$\kappa_1>0$
and
$\lambda>1$
depending only on $d$, such that, with
$$
R_1:=\frac{\kappa_1}{q_X},
\quad
R_2:=\lambda R_1,
$$
we have
\begin{equation}\label{eq:exp-frame-annulus}
\int_{R_1<\|\omega\|_2\le R_2} |P_X(\omega)|^2\,d\omega
\ge
c_{\rm A}R_1^d \|\boldsymbol a\|_{\ell_2}^2,
\end{equation}
where $c_{\rm A}>0$ depends only on $d$.
\end{lemma}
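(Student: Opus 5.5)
The plan is to prove the ball estimate \eqref{eq:exp-frame-ball} with a smooth cutoff: pick a window whose Fourier transform has compact support, so that the cross terms vanish exactly once the points are separated. The annulus bound \eqref{eq:exp-frame-annulus} will then follow by subtracting the upper bound on a small ball from the lower bound on a large ball.

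\emph{Lower bound in \eqref{eq:exp-frame-ball}.} Fix a nonnegative $\psi\in C_c^\infty(\R^d)$ with $\operatorname{supp}\psi\subset B(0,1)$, $\psi\not\equiv0$, and put $g=\psi*\widetilde\psi$, where $\widetilde\psi(x)=\psi(-x)$. Then $\operatorname{supp} g\subset B(0,2)$, and $\widehat g=(2\pi)^{d/2}|\widehat\psi|^2\ge0$ is a Schwartz function with $\widehat g(0)>0$. Set $\chi(\omega)=\widehat g(\omega)/\|\widehat g\|_\infty$, which satisfies $0\le\chi\le1$. Since $\widehat g$ is continuous and positive at $0$, there is $r_0>0$ with $\chi\ge c_1>0$ on $B(0,r_0)$. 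For a parameter $T>0$ I compute
\[
\int_{\R^d}\chi(\omega/T)|P_X(\omega)|^2\,d\omega=\sum_{j,k}a_j\overline{a_k}\int\chi(\omega/T)e^{-i(x_j-x_k)^T\omega}\,d\omega = c\,T^d\sum_{j,k}a_j\overline{a_k}\,g(T(x_j-x_k)).
\]
For $j\neq k$ we have $\|x_j-x_k\|_2\ge 2q_X$, so if $Tq_X\ge1$ the off-diagonal terms vanish. The identity then reduces to $c\,T^d g(0)\|\boldsymbol a\|_{\ell_2}^2$, with $g(0)=\|\psi\|_2^2>0$. This gives an exact identity. To bound the ball integral from below, I would instead use a window that is positive on the ball and small outside it. Concretely, I will take $\chi$ as above with $\chi\le1$ everywhere and write
\[
\int_{B(0,R)}|P_X|^2\ge\int\chi(\omega/T)|P_X|^2-\int_{\|\omega\|>R}\chi(\omega/T)|P_X|^2 .
\]
The tail is handled using the rapid decay of $\chi$ together with the upper bound below on dyadic shells: with $T=R/M$ for a large fixed $M$, the tail is at most $\sum_{m}\sup_{\|\eta\|\ge 2^mM}|\chi(\eta)|\,C(2^{m+1}R)^d\|\boldsymbol a\|^2$. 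This is $\le\frac12 cT^dg(0)\|\boldsymbol a\|^2$ once $M$ is large, since the sup decays faster than any power of $M$. The admissibility condition $Tq_X\ge1$ becomes $Rq_X\ge M=:\kappa_0$.

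\emph{Upper bound in \eqref{eq:exp-frame-ball}.} Using $\chi\ge c_1$ on $B(0,r_0)$, I bound
\[
\int_{B(0,R)}|P_X|^2\le c_1^{-1}\int\chi(\omega/T)|P_X|^2
\]
with $T=R/r_0$, and the right-hand side equals $c\,T^dg(0)\|\boldsymbol a\|^2$, provided $Tq_X\ge1$. For smaller $R$ I would avoid this dependence. The alternative is a packing bound on the off-diagonal sum: $\sum_{k}|g(T(x_j-x_k))|$ is bounded by a constant times $\|g\|_\infty$ times the number of centers in a ball of radius $2/T$. Separation bounds that count by $C(1+(Tq_X)^{-1})^d$, which stays bounded when $Tq_X\gtrsim1$, and Schur's test then gives the upper bound. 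In any case the upper bound is only needed for $Rq_X\ge\kappa_0$, and for all radii $\ge\kappa_0/q_X$, so there is no circularity in the tail step: each shell radius $2^{m+1}R\ge R$ falls in the range where the upper bound holds with $T=2^{m+1}R/r_0\ge R/r_0$. I will therefore choose $M\ge r_0$-compatible constants and prove the upper bound first.

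\emph{Annulus estimate \eqref{eq:exp-frame-annulus}.} Take $R_1=\kappa_1/q_X$ with $\kappa_1:=\kappa_0$, and $R_2=\lambda R_1$. Then
\[
\int_{R_1<\|\omega\|\le R_2}|P_X|^2\ge c_F\lambda^dR_1^d\|\boldsymbol a\|^2-C_FR_1^d\|\boldsymbol a\|^2 .
\]
Choosing $\lambda$ with $c_F\lambda^d=2C_F$ gives $c_A=C_F$. The main obstacle is the lower-bound tail control, and in particular making the constant bookkeeping consistent ($M$ versus $r_0$ versus $\kappa_0$) so that every radius at which the upper bound is invoked satisfies its hypothesis.
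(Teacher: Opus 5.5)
Your proof is correct, and it differs from the paper's in how the cross terms are handled.

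The paper uses a frequency window $\varphi\in C_0^\infty(B(0,1))$. This gives $\int\varphi(\omega/R)|P_X|^2\,d\omega\le\|\varphi\|_{L^\infty}\int_{B(0,R)}|P_X|^2\,d\omega$ with no tail term. The price is that the Gram entries $\widehat\varphi(R(x_j-x_k))$ do not vanish. The paper controls them by the rapid decay of $\widehat\varphi$ together with a packing count of separated centers in annuli. This makes the off-diagonal part $O((Rq_X)^{d-M})\|\boldsymbol a\|_{\ell_2}^2$, which is absorbed into the diagonal once $Rq_X\ge\kappa_0$. The upper bound repeats the same computation with a window $\widetilde\varphi\ge1$ on $B(0,1)$.

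You put the compact support on the other side instead. With $g=\psi*\widetilde\psi$ supported in $B(0,2)$, the Gram matrix is exactly diagonal for $Tq_X\ge1$. This gives the weighted Parseval identity $\int\chi(\omega/T)|P_X|^2\,d\omega=c\,T^dg(0)\|\boldsymbol a\|_{\ell_2}^2$. No packing argument is needed, and the upper bound is immediate from $\chi\ge c_1$ on $B(0,r_0)$, valid for $Rq_X\ge r_0$.

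The cost is that $\chi$ is not supported in a ball, so the lower bound needs your dyadic tail estimate. That estimate reuses the upper bound on the shells $B(0,2^{m+1}R)$. It also needs decay of $\chi$ of some order $N>d$, both to sum the shells and to beat the $M^{-d}$ size of the main term. With $\kappa_0:=M\ge r_0$ and the upper bound proved first, your bookkeeping is consistent. The annulus step is then identical to the paper's. The aside about Schur's test for small $R$ is not needed and can be dropped.
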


\begin{proof}
We first establish the estimates on balls.
Choose a real-valued, nonnegative, even function
$$
\varphi\in C_0^\infty(B(0,1))
$$
such that $\varphi\not\equiv0$. With the Fourier transform convention
used throughout the paper, expanding the square gives
\begin{equation}
\begin{aligned}\label{eq:exp-chi-expansion}
\int_{\mathbb R^d}
\varphi(\omega/R)|P_X(\omega)|^2\,d\omega
&=
\sum_{j,k=1}^N
a_j\overline{a_k}
\int_{\mathbb R^d}
\varphi(\omega/R)
e^{-i(x_j-x_k)^T \omega}\,d\omega
\\
&=
(2\pi)^{d/2} R^d
\sum_{j,k=1}^N
a_j\overline{a_k}
\widehat\varphi\bigl(R(x_j-x_k)\bigr).
\end{aligned}
\end{equation}
Since $\varphi\in C_0^\infty(\mathbb R^d)$,
its Fourier transform is rapidly decreasing. Hence, for every
integer $M>0$, there exists $C_M>0$ such that
\begin{equation}
\label{eq:chi-hat-decay}
|\widehat\varphi(\xi)|
\le
C_M(1+\|\xi\|_2)^{-M},
\quad
\xi\in\mathbb R^d.
\end{equation}
We shall use the following elementary consequence of the
separation of $X$. For any $M>d$,
\begin{equation}
\label{eq:packing-sum}
\sup_{1\le j\le N}
\sum_{\substack{k=1\\k\neq j}}^N
\bigl(1+R\|x_j-x_k\|_2\bigr)^{-M}
\le
C_{d,M}(Rq_X)^{-M+d},
\quad
Rq_X\ge1.
\end{equation}
Indeed, the balls $B(x_k,q_X)$ are pairwise disjoint. Dividing
the points relative to $x_j$ into annuli
$$
2mq_X \le \|x_k-x_j\|_2 < 2(m+1)q_X, \quad m=1,2,\ldots,
$$
a standard packing argument shows that the number of centers in
the $m$th annulus is bounded by
$C_d(1+m)^{d-1}$.
Therefore
$$
\begin{aligned}
\sum_{k\neq j}\bigl(1+R\|x_j-x_k\|_2\bigr)^{-M}
&\le C_d \sum_{m=1}^\infty m^{d-1}(1+2mRq_X)^{-M}\\
&\le C_{d,M} (Rq_X)^{-M} \sum_{m=1}^\infty m^{d-1-M}.
\end{aligned}
$$
Since $M>d$, the series converges. This proves, in fact, the
slightly stronger estimate
$C(Rq_X)^{-M}$,
and hence \eqref{eq:packing-sum}. We now return to \eqref{eq:exp-chi-expansion}. The diagonal terms
are
$$
(2\pi)^{d/2}R^d\widehat\varphi(0) \|\boldsymbol a\|_{\ell_2}^2,
$$
where
$\widehat\varphi(0)>0$.
For the off-diagonal terms, using $2|a_j||a_k|\le |a_j|^2+|a_k|^2$
together with \eqref{eq:chi-hat-decay} and
\eqref{eq:packing-sum}, gives
\begin{align*}
\left|\sum_{j\neq k}a_j\overline{a_k}\widehat\varphi\bigl(R(x_j-x_k)\bigr)\right|
&\le
C_M \sum_{j=1}^N |a_j|^2 \sum_{k\neq j} \bigl(1+R\|x_j-x_k\|_2\bigr)^{-M}\\
&\le
C(Rq_X)^{-M+d} \|\boldsymbol a\|_{\ell_2}^2.
\end{align*}
Thus, by choosing $\kappa_0>0$ sufficiently large, we ensure that
for $Rq_X\ge\kappa_0$ the off-diagonal contribution is at most
one half of the diagonal contribution. Consequently,
\begin{equation*}
\int_{\mathbb R^d}\varphi(\omega/R)|P_X(\omega)|^2\,d\omega
\ge
cR^d\|\boldsymbol a\|_{\ell_2}^2.
\end{equation*}
Since $\varphi$ is supported in $B(0,1)$,
$$
\int_{\mathbb R^d}
\varphi(\omega/R)|P_X(\omega)|^2\,d\omega
\le
\|\varphi\|_{L^\infty}
\int_{B(0,R)}
|P_X(\omega)|^2\,d\omega.
$$
This proves the lower estimate in
\eqref{eq:exp-frame-ball}.

To obtain the upper estimate, choose an even nonnegative function
$$
\widetilde\varphi\in C_0^\infty(B(0,2))
$$
such that
$\widetilde\varphi(\omega)\ge1$
for $\omega\in B(0,1)$. Then
$$
\int_{B(0,R)}|P_X(\omega)|^2\,d\omega
\le
\int_{\mathbb R^d} \widetilde\varphi(\omega/R)|P_X(\omega)|^2\,d\omega.
$$
Repeating the calculation above, but now taking absolute values
of both diagonal and off-diagonal terms, gives
$$
\int_{\mathbb R^d} \widetilde\varphi(\omega/R)|P_X(\omega)|^2\,d\omega
\le C R^d\|\boldsymbol a\|_{\ell_2}^2
$$
whenever $Rq_X\ge\kappa_0$, after increasing $\kappa_0$ if
necessary. This proves the upper estimate in
\eqref{eq:exp-frame-ball}.

It remains to obtain the annular lower bound. Choose
$\kappa_1\ge\kappa_0$
and set
$R_1={\kappa_1}/{q_X}$
and let
$R_2=\lambda R_1$
where $\lambda>1$ will be fixed below. By
\eqref{eq:exp-frame-ball},
$$
\int_{B(0,R_2)}|P_X(\omega)|^2\,d\omega
\ge
c_{\rm F}R_2^d\|\boldsymbol a\|_{\ell_2}^2,
$$
whereas
$$
\int_{B(0,R_1)}|P_X(\omega)|^2\,d\omega
\le
C_{\rm F}R_1^d\|\boldsymbol a\|_{\ell_2}^2.
$$
Therefore
\begin{align*}
\int_{R_1<\|\omega\|_2\le R_2}|P_X(\omega)|^2\,d\omega
&=
\int_{B(0,R_2)}|P_X|^2\,d\omega-\int_{B(0,R_1)}|P_X|^2\,d\omega\\
&\ge
\left(c_{\rm F}\lambda^d-C_{\rm F}\right)R_1^d\|\boldsymbol a\|_{\ell_2}^2.
\end{align*}
Choose $\lambda>1$ so large that
$c_{\rm F}\lambda^d-C_{\rm F}>0$.
For example, it is enough to have
$\lambda^d\ge{2C_{\rm F}}/{c_{\rm F}}$.
Then
$c_{\rm F}\lambda^d-C_{\rm F}\ge C_{\rm F}>0$,
and \eqref{eq:exp-frame-annulus} follows.
\end{proof}

\begin{theorem}
\label{thm:scale-uniform-whole-space}
Let $\Phi:\mathbb R^d\to\mathbb R$ be a strictly positive
definite kernel satisfying the decay condition \eqref{eq:fourier_decay}
for some $\sigma>d/2$. 
Fix $c_0>0$. Then, for every $0\leq\alpha\leq \tau\leq\sigma$
there exists a constant $C=C_{d,\sigma,\tau,\alpha,\Phi,c_0}>0$
such that
\begin{equation}\label{eq:scale-uniform-whole-space-Bernstein}
\|v\|_{H^\tau(\mathbb R^d)}
\le
Cq_X^{\alpha-\tau}
\|v\|_{H^\alpha(\mathbb R^d)}
\end{equation}
for every $0<\delta\le1$, every finite $X=\{x_1,\ldots,x_N\}\subset \R^d$ satisfying $q_X\le c_0\delta$, and every $v\in V_{\Phi_\delta,X}$.
The constant $C$ is in particular independent of $\delta$, $X$, $N$, and $v$.
\end{theorem}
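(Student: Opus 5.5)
We work directly with the Fourier representation. For $v=\sum_{j}a_j\Phi_\delta(\cdot-x_j)$ we have $\widehat v(\omega)=\widehat\Phi(\delta\omega)P_X(\omega)$. We split the $H^\tau$ integral at the radius $R_1=\kappa_1/q_X$ of Lemma~\ref{lem:exponential-frame}, with $\kappa_1\ge\kappa_0$ and $\lambda$ chosen there. We bound each part by a multiple of $q_X^{-2(\tau-\alpha)}\|v\|_{H^\alpha(\mathbb R^d)}^2$. Throughout we use that $q_X\le c_0\delta\le c_0$. This gives $R_1\ge \kappa_1/c_0$, so $1+R_1^2\le C R_1^2$. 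It also gives the key inequality
\[
\delta R_1=\kappa_1\delta/q_X\ge \kappa_1/c_0 .
\]

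\emph{Low frequencies.} On $\|\omega\|_2\le R_1$ we bound $(1+\|\omega\|_2^2)^\tau\le (1+R_1^2)^{\tau-\alpha}(1+\|\omega\|_2^2)^\alpha\le C q_X^{-2(\tau-\alpha)}(1+\|\omega\|_2^2)^\alpha$. The low-frequency part is therefore at most $Cq_X^{-2(\tau-\alpha)}\|v\|_{H^\alpha}^2$. No information about $\widehat\Phi(\delta\,\cdot)$ is needed here.

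\emph{High frequencies: upper bound.} On $\|\omega\|_2>R_1$ we decompose into dyadic shells $2^kR_1<\|\omega\|_2\le 2^{k+1}R_1$, $k\ge0$. On the $k$th shell we use the upper bound in \eqref{eq:scaled-fourier-decay}. Since $\delta\cdot 2^kR_1\ge\kappa_1/c_0$, we have $1+\delta^2\|\omega\|_2^2\simeq \delta^2 4^kR_1^2$ with constants depending only on $c_0,\kappa_1$. Hence the weight $(1+\|\omega\|_2^2)^\tau\widehat\Phi(\delta\omega)^2$ is bounded by $C(2^kR_1)^{2\tau}(\delta 2^kR_1)^{-4\sigma}$ on that shell. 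The upper frame bound \eqref{eq:exp-frame-ball}, applicable since $2^{k+1}R_1q_X\ge\kappa_0$, gives $\int_{\text{shell}}|P_X|^2\le C_F(2^{k+1}R_1)^d\|\boldsymbol a\|_{\ell_2}^2$. Summing over $k$ yields
\[
\int_{\|\omega\|_2>R_1}\!(1+\|\omega\|_2^2)^\tau|\widehat v|^2\,d\omega\le C\,R_1^{2\tau+d}(\delta R_1)^{-4\sigma}\|\boldsymbol a\|_{\ell_2}^2\sum_{k\ge0}2^{k(2\tau+d-4\sigma)} .
\]
The series converges because $2\tau+d\le 2\sigma+d<4\sigma$, which uses $\tau\le\sigma$ and $\sigma>d/2$.

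\emph{High frequencies: lower bound, and conclusion.} On the annulus $R_1<\|\omega\|_2\le R_2=\lambda R_1$ we use the lower bound in \eqref{eq:scaled-fourier-decay}. Again by $\delta R_1\ge\kappa_1/c_0$, we get $(1+\|\omega\|_2^2)^\alpha\widehat\Phi(\delta\omega)^2\ge c\,R_1^{2\alpha}(\delta R_1)^{-4\sigma}$ there, with $c$ depending on $\lambda,\sigma,c_0,\kappa_1,c_\Phi$. Combined with the annular frame bound \eqref{eq:exp-frame-annulus}, this gives
\[
\|v\|_{H^\alpha}^2\ge c\,R_1^{2\alpha+d}(\delta R_1)^{-4\sigma}\|\boldsymbol a\|_{\ell_2}^2 .
\]
Dividing the upper bound by this lower bound, the common factors $(\delta R_1)^{-4\sigma}$ and $R_1^d$ cancel. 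The high-frequency part is therefore at most $CR_1^{2(\tau-\alpha)}\|v\|_{H^\alpha}^2=Cq_X^{-2(\tau-\alpha)}\|v\|_{H^\alpha}^2$. Adding the two parts and taking square roots gives \eqref{eq:scale-uniform-whole-space-Bernstein}.

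The main obstacle is this cancellation of the $\delta$-dependent multiplier. It requires that $\widehat\Phi(\delta\omega)$ behaves like a pure power $(\delta\|\omega\|_2)^{-2\sigma}$, with the same constants, both on the annulus used for the lower bound and on all outer shells. This is exactly where the hypothesis $q_X\le c_0\delta$ enters: it keeps $\delta R_1$ bounded below, so that the factor $1$ in $1+\delta^2\|\omega\|_2^2$ is negligible beyond $R_1$. A second requirement is the summability of the shell series, which relies on $\sigma>d/2$.
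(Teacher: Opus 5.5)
Your proposal is correct and follows the paper's proof. It uses the same split at $R_1=\kappa_1/q_X$, the same dyadic-shell upper bound via the ball frame estimate, and the same annular lower bound for $\|v\|_{H^\alpha(\mathbb R^d)}$ in which the factor $(\delta R_1)^{-4\sigma}$ cancels. The only difference is that you run the argument directly for every $\tau\le\sigma$; the shell series still converges because $2\tau+d-4\sigma\le d-2\sigma<0$. The paper instead treats only $\tau=\sigma$ and reaches intermediate $\tau$ through the H\"older interpolation $\|v\|_{H^\tau}\le\|v\|_{H^\sigma}^\theta\|v\|_{H^\alpha}^{1-\theta}$, a step your version makes unnecessary.
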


\begin{proof}
First we prove the result for the endpoint case $\tau = \sigma$.
The case $\alpha=\sigma$ is trivial. We therefore assume
$0\le\alpha<\sigma$.
Let
$$
v(x)=\sum_{j=1}^N a_j\Phi_\delta(x-x_j).
$$
Then we have 
\begin{equation*}
\widehat v(\omega)=\widehat\Phi(\delta\omega)P_X(\omega), \quad P_X(\omega)
= \sum_{j=1}^N a_j e^{-ix_j^T\omega},
\end{equation*}
Thus, for $0\le s\le\sigma$,
\begin{equation}\label{eq:kernel-network-Hs}
\|v\|_{H^s(\mathbb R^d)}^2
=(2\pi)^{-d/2}\int_{\mathbb R^d}(1+\|\omega\|_2^2)^s|\widehat\Phi(\delta\omega)|^2
|P_X(\omega)|^2\,d\omega,
\end{equation}
up to a fixed Fourier-normalization constant.
Let $\kappa_1$ and $\lambda$ be the constants from
Lemma~\ref{lem:exponential-frame}. Increasing $\kappa_1$ if
necessary, we assume in addition that
\begin{equation*}
\label{eq:kappa-choices}
\frac{\kappa_1}{c_0}\ge1,
\quad
\frac{\kappa_1}{c_0}\ge\lambda^{-1},
\quad
\kappa_1\ge c_0.
\end{equation*}
Define
$$
R_1:=\frac{\kappa_1}{q_X},
\quad
R_2:=\lambda R_1.
$$
Since $q_X\le c_0\delta\le c_0$, we have
$R_1\ge {\kappa_1}/{c_0}\ge1$.
Split
\begin{equation}
\label{eq:Hsigma-split}
\|v\|_{H^\sigma(\mathbb R^d)}^2
=
I_{\rm low}+I_{\rm high},
\end{equation}
where
$$
I_{\rm low}
=
\int_{\|\omega\|_2\le R_1}
(1+\|\omega\|_2^2)^\sigma
|\widehat v(\omega)|^2\,d\omega
$$
and
$$
I_{\rm high}
=
\int_{\|\omega\|_2>R_1}
(1+\|\omega\|_2^2)^\sigma
|\widehat v(\omega)|^2\,d\omega.
$$
We now split the remaining parts of the proof into three steps to find an upper bound for $I_{\rm low}$ (Step 1), and upper bound for $I_{\rm high}$ (Step 2), and a lower bound for the weaker Sobolev norm $\|v\|_{H^\alpha(\R^d)}$ (Step 3).

\medskip
\noindent
{Step 1:}
For $\|\omega\|_2\le R_1$, we have 
$$
(1+\|\omega\|_2^2)^\sigma
\le
(1+R_1^2)^\beta
(1+\|\omega\|_2^2)^\alpha.
$$
where $\beta:=\sigma-\alpha>0$.
Therefore
$$
I_{\rm low}
\le
(1+R_1^2)^\beta
\|v\|_{H^\alpha(\mathbb R^d)}^2,
$$
Since
$R_1={\kappa_1}/{q_X}$ and $q_X\le c_0$, we can write 
$$
1+R_1^2
=
1+\frac{\kappa_1^2}{q_X^2}
\le
\frac{c_0^2+\kappa_1^2}{q_X^2}.
$$
Hence
\begin{equation}
\label{eq:Ilow-final}
I_{\rm low}
\le
Cq_X^{-2(\sigma-\alpha)}
\|v\|_{H^\alpha(\mathbb R^d)}^2.
\end{equation}

\medskip
\noindent
{Step 2:} 
If $\|\omega\|_2>R_1$, then from condition $q_X\leq c_0\delta$ we have 
$$
\delta\|\omega\|_2>\delta R_1=\frac{\kappa_1\delta}{q_X}\ge\frac{\kappa_1}{c_0}\ge1.
$$
Thus the algebraic decay assumption \eqref{eq:scaled-fourier-decay} implies
\begin{equation*}
|\widehat\Phi(\delta\omega)|^2
\le
C\delta^{-4\sigma}\|\omega\|_2^{-4\sigma}.
\end{equation*}
Also, because of $\|\omega\|_2\ge R_1\ge1$, we have
$$
(1+\|\omega\|_2^2)^\sigma
\le
C\|\omega\|_2^{2\sigma}.
$$
Consequently,
\begin{equation}
\label{eq:Ihigh-exp}
I_{\rm high}
\le
C\delta^{-4\sigma}
\int_{\|\omega\|_2>R_1}
\|\omega\|_2^{-2\sigma}
|P_X(\omega)|^2\,d\omega.
\end{equation}
In order to obtain a proper bound for the right-hand side of \eqref{eq:Ihigh-exp}, decompose the exterior region $\|\omega\|_2>R_1$ into the annuli
$$
A_k=\left\{2^kR_1<\|\omega\|_2\le2^{k+1}R_1\right\},
\quad k=0,1,\ldots .
$$
By the upper ball estimate
\eqref{eq:exp-frame-ball}, we have 
$$
\int_{B(0,2^{k+1}R_1)}
|P_X(\omega)|^2\,d\omega
\le
C(2^{k+1}R_1)^d
\|\boldsymbol a\|_{\ell_2}^2.
$$
Thus
$$
\begin{aligned}
\int_{A_k}
\|\omega\|_2^{-2\sigma}|P_X(\omega)|^2\,d\omega
&\leq
(2^kR_1)^{-2\sigma}
\int_{B(0,2^{k+1}R_1)}
|P_X(\omega)|^2\,d\omega
\\
&\leq C 2^{-k(2\sigma-d)} R_1^{d-2\sigma} \|\boldsymbol a\|_{\ell_2}^2.
\end{aligned}
$$
This shows that
$$
\int_{\|\omega\|_2>R_1} \|\omega\|_2^{-2\sigma} |P_X(\omega)|^2\,d\omega \leq  C R_1^{d-2\sigma}
\|\boldsymbol a\|_{\ell_2}^2 \sum_{k=0}^\infty 2^{-k(2\sigma-d)}.
$$
Because
$
2\sigma-d>0,
$
the geometric series converges. Therefore
\begin{equation}
\label{eq:weighted-exp-tail}
\int_{\|\omega\|_2>R_1} \|\omega\|_2^{-2\sigma} |P_X(\omega)|^2\,d\omega \leq C R_1^{d-2\sigma}
\|\boldsymbol a\|_{\ell_2}^2.
\end{equation}
Combining
\eqref{eq:Ihigh-exp} and
\eqref{eq:weighted-exp-tail}, we obtain
\begin{equation}
\label{eq:Ihigh-coeff}
I_{\rm high}
\le
C
\delta^{-4\sigma}
R_1^{d-2\sigma}
\|\boldsymbol a\|_{\ell_2}^2.
\end{equation}

\medskip
\noindent
{Step 3:} Consider the fixed-ratio annulus
$$
A_{12}= \left\{R_1<\|\omega\|_2\le R_2 \right\}.
$$
By Lemma~\ref{lem:exponential-frame},
\begin{equation}
\label{eq:annulus-use}
\int_{A_{12}} |P_X(\omega)|^2\,d\omega \geq c_{\rm A}R_1^d \|\boldsymbol a\|_{\ell_2}^2.
\end{equation}
For $\omega\in A_{12}$, we therefore have 
$R_1\le\|\omega\|_2\le\lambda R_1$.
Since $R_1\ge1$,
\begin{equation}
\label{eq:Halpha-weight-annulus}
(1+\|\omega\|_2^2)^\alpha
\ge
cR_1^{2\alpha}.
\end{equation}
Furthermore,
$\delta\|\omega\|_2 \geq \delta R_1 \geq {\kappa_1}/{c_0}\ge1$.
On the other hand,
$\delta\|\omega\|_2\leq \lambda\delta R_1$.
Using the lower Fourier-decay bound in
\eqref{eq:scaled-fourier-decay},
$
\widehat\Phi(\delta\omega)\ge c_\Phi (1+\delta^2\|\omega\|_2^2)^{-\sigma}.
$
Since $\delta\|\omega\|_2\ge1$,
$$
1+\delta^2\|\omega\|_2^2 \leq 2\delta^2\|\omega\|_2^2
\leq 2\lambda^2\delta^2 R_1^2.
$$
Therefore
\begin{equation}
\label{eq:kernel-high-lower}
|\widehat\Phi(\delta\omega)|^2
\ge
c\, \delta^{-4\sigma}R_1^{-4\sigma},
\quad
\omega\in A_{12},
\end{equation}
where the constant $c$ depends on $\lambda$, $\sigma$, and $c_\Phi$,
but not on $\delta$ or $X$.
It follows from
\eqref{eq:kernel-network-Hs},
\eqref{eq:Halpha-weight-annulus},
\eqref{eq:kernel-high-lower}, and
\eqref{eq:annulus-use} that
$$
\begin{aligned}
\|v\|_{H^\alpha(\mathbb R^d)}^2
&\geq
\int_{A_{12}}
(1+\|\omega\|_2^2)^\alpha
|\widehat\Phi(\delta\omega)|^2
|P_X(\omega)|^2\,d\omega
\\
&\geq
c\,
R_1^{2\alpha}
\delta^{-4\sigma}
R_1^{-4\sigma}
\int_{A_{12}}|P_X(\omega)|^2\,d\omega
\\
&\ge
c\,
\delta^{-4\sigma}
R_1^{d+2\alpha-4\sigma}
\|\boldsymbol a\|_{\ell_2}^2.
\end{aligned}
$$
Hence
\begin{equation}
\label{eq:coeff-via-Halpha}
\delta^{-4\sigma} \|\boldsymbol a\|_{\ell_2}^2
\le C R_1^{4\sigma-2\alpha-d} \|v\|_{H^\alpha(\mathbb R^d)}^2.
\end{equation}
Substituting \eqref{eq:coeff-via-Halpha} into
\eqref{eq:Ihigh-coeff} gives
$$
\begin{aligned}
I_{\rm high} &\le C R_1^{d-2\sigma} R_1^{4\sigma-2\alpha-d} \|v\|_{H^\alpha(\mathbb R^d)}^2 \\
&= C R_1^{2(\sigma-\alpha)} \|v\|_{H^\alpha(\mathbb R^d)}^2.
\end{aligned}
$$
Since
$R_1={\kappa_1}/{q_X}$,
we conclude that
\begin{equation}
\label{eq:Ihigh-final}
I_{\rm high}
\le
C
q_X^{-2(\sigma-\alpha)}
\|v\|_{H^\alpha(\mathbb R^d)}^2.
\end{equation}
Finally, combining \eqref{eq:Hsigma-split},
\eqref{eq:Ilow-final}, and \eqref{eq:Ihigh-final}, and taking square roots, we obtain
$$
\|v\|_{H^\sigma(\mathbb R^d)} \le C q_X^{\alpha-\sigma} \|v\|_{H^\alpha(\mathbb R^d)},
$$
which proves \eqref{eq:scale-uniform-whole-space-Bernstein} for $\tau=\sigma$. 
The constant depends only on the fixed quantities
$d,\sigma,\alpha,\Phi$, and $c_0$, and are independent of
$\delta$, $X$, $N$, and $v$.

\medskip
\noindent
Finally, we prove the theorem for the case $0\leq\tau < \sigma$ using the interpolation theory in Sobolev spaces. 
Let
$0\le\alpha<\tau<\sigma$, and choose $\theta\in(0,1]$ such that
$$
\tau=(1-\theta)\alpha+\theta\sigma,
\quad
\theta=\frac{\tau-\alpha}{\sigma-\alpha}.
$$
We use the interpolation scale in Sobolev spaces. See for example \cite{BerghLofstrom1976,Triebel1978}. We have
$$
\|v\|_{H^\tau(\mathbb R^d)}
\leq
\|v\|_{H^\sigma(\mathbb R^d)}^\theta
\|v\|_{H^\alpha(\mathbb R^d)}^{1-\theta}
$$
We now apply the above inverse inequality to get
$$
\begin{aligned}
\|v\|_{H^\tau(\mathbb R^d)}
&\leq \left(Cq_X^{\alpha-\sigma}\|v\|_{H^\alpha(\mathbb R^d)}\right)^\theta \|v\|_{H^\alpha(\mathbb R^d)}^{1-\theta}  \\
& = C^\theta q_X^{\theta(\alpha-\sigma)} \|v\|_{H^\alpha(\mathbb R^d)}\\
&= C^\theta q_X^{\alpha-\tau} \|v\|_{H^\alpha(\mathbb R^d)},
\end{aligned}
$$
where we used $\theta(\alpha-\sigma)=\alpha-\tau$. Absorbing $C^\theta$ into the constant proves
\eqref{eq:scale-uniform-whole-space-Bernstein}.
\end{proof}

\section{Concluding remarks}

In this paper, we have developed inverse inequalities for scaled kernel
spaces with particular attention to the dependence of the estimates on
the kernel scale $\delta$. One of the main results, given in
Corollary~\ref{cor:scale-uniform-bernstein}, is the bounded-domain
estimate
\begin{equation*}
\|v\|_{H^\tau(\Omega)} \leq C q_X^{-\tau}\|v\|_{L^2(\Omega)},
\quad v\in V_{\Phi_\delta,X}, \quad 0\leq\tau\leq\sigma,
\end{equation*}
where $\sigma$ represents the Sobolev smoothness associated with the
reference kernel $\Phi$. Most importantly, the constant $C$ is
independent of both the scale parameter $\delta$ and the center set
$X$. The estimate requires only the one-sided condition
$$
q_X\leq c_0\delta.
$$
Thus, the kernel scale is allowed to decrease more slowly than the
separation distance, and in particular the ratio $q_X/\delta$ may tend
to zero under refinement. This flexibility is important in multiscale
kernel methods, where the kernel scale and the discretization parameters
need not decrease at the same rate.
The scaled native-space and band-limited interpolation arguments used
to obtain this result led to the $L^2(\Omega)$ norm on the
weaker side. A direct attempt to use the same idea for a positive
Sobolev index $\alpha$, even if we replace $\Omega$ by $\R^d$, introduces additional powers of $\delta$ and
therefore does not yield the desired scale-uniform estimate. To overcome
this difficulty on the whole space, we developed a different argument
that works directly with the Fourier representation of functions in
$V_{\Phi_\delta,X}$. By separating the Fourier domain into low and
high frequency regions and using frame estimates for the exponential
polynomial associated with the center set, we obtained the more general
scale-uniform inequality
\begin{equation*}
\|v\|_{H^\tau(\mathbb R^d)}
\leq C q_X^{\alpha-\tau}\|v\|_{H^\alpha(\mathbb R^d)},
\quad v\in V_{\Phi_\delta,X}, \quad 0\leq\alpha\leq\tau\leq\sigma,
\end{equation*}
again under the condition $q_X\leq c_0\delta$. In particular, no
additional power of $\delta$ remains in the final estimate.

A natural next question is whether the latter result can be extended
to bounded domains while having both the full range
$0\leq\alpha\leq\tau\leq\sigma$ and the scale condition
$q_X\leq c_0\delta$. This does not follow by simply restricting the
whole-space inequality to $\Omega$, since the weaker whole-space norm
cannot in general be controlled by the corresponding norm of the
restriction to $\Omega$. The boundary therefore introduces a genuine
difficulty that is absent from the whole-space Fourier analysis.
For fixed-scale kernel spaces, Cheung, Ling, and Schaback
\cite{CheungLingSchaback2018} address the bounded-domain problem by
introducing an auxiliary kernel associated with a differential
operator and combining a band-limited interpolation argument with the
optimality property of the kernel interpolant. A direct adaptation of
this approach to the scaled spaces generated by $\Phi_\delta$ produces
an additional negative power of $\delta$ in an intermediate estimate (optimality of the interpolant),
which prevents uniformity as $\delta\to0$. We also found that a
coefficient-stability approach based on frame estimates, which we used here for the whole space estimates, can recover
the desired bounded-domain inequality for
compactly supported kernels, but only under the stronger
two-sided condition
$c_1\delta\leq q_X\leq c_2\delta$.
This excludes the multiscale regime $q_X/\delta\to0$ that is of primary interest here. For this reason, we
have not pursued that restricted result in the present paper.
Establishing a general bounded-domain inequality of the form
$$
\|v\|_{H^\tau(\Omega)} \leq C q_X^{\alpha-\tau} \|v\|_{H^\alpha(\Omega)}, \quad 0\leq\alpha\leq\tau\leq\sigma,
$$
with a constant uniform in $\delta$ under only $q_X\leq c_0\delta$, therefore remains an open problem.

\bibliographystyle{plain}
\bibliography{refs}

\end{document}